\documentclass[11pt]{amsart}
\usepackage{geometry} % see geometry.pdf on how to lay out the page. There's lots.
\geometry{a4paper} % or letter or a5paper or ... etc
\usepackage{multirow}

\usepackage{mathrsfs}
\usepackage[french]{babel}
\usepackage[latin1]{inputenc}

\usepackage{amssymb}
\usepackage{pdfsync}
\usepackage{array}
\usepackage{amsmath}
\usepackage{amsfonts}
\usepackage{amstext}
\usepackage{amsopn}
\usepackage{amsbsy}
\usepackage{shortvrb,fancybox}
\usepackage{rotating}
\usepackage{xy}
\xyoption{all}
\usepackage{multirow}

\newcommand{\Z}{\mathbb{Z}}
\newcommand{\Q}{\mathbb{Q}}

\newcommand{\C}{\mathbb{C}}

\newcommand{\aK}{\Overline{2.5}{-1.5}{K}}

\newcommand{\Kl}{K^{\lambda}}

%Sur un ideal de $K$ hors de $p$
\newcommand{\q}{\mathfrak{q}}
\newcommand{\Pcq}{\mathcal{P}^{\mathfrak{q}}}
\newcommand{\OLq}{\mathcal{O}_{L^{\mathfrak{q}}}}

\newcommand{\Kq}{K_{\mathfrak{q}}}

\newcommand{\kq}{k_{\mathfrak{q}}}
\newcommand{\akq}{\overline{k_{\mathfrak{q}}}}
\newcommand{\Iq}{I_{\mathfrak{q}}}
\newcommand{\Dq}{D_{\mathfrak{q}}}

\newcommand{\sq}{\sigma_{\mathfrak{q}}}

\newcommand{\Lq}{L^{\mathfrak{q}}}
\newcommand{\bq}{\beta_{\mathfrak{q}}}

\newcommand{\Pq}{P_{\mathfrak{q}}}

\newcommand{\phip}{\varphi_p}

\newcommand{\p}{\mathfrak{p}}

\newcommand{\Qp}{\mathbb{Q}_{p}}
\newcommand{\Kp}{K_{\mathfrak{p}}}

\newcommand{\aKp}{\overline{K_{\mathfrak{p}}}}

\newcommand{\Fp}{\mathbb{F}_{p}}
\newcommand{\Fpx}{\mathbb{F}_{p}^{\times}}

\newcommand{\Ip}{I_{\mathfrak{p}}}
\newcommand{\ep}{e_{\mathfrak{p}}}
\newcommand{\rp}{r_{\mathfrak{p}}}

\newcommand{\ap}{a_{\p}}

\newcommand{\GL}{\mathrm{GL}}
\newcommand{\PGL}{\mathrm{PGL}}
\newcommand{\SL}{\mathrm{SL}}

\newcommand{\la}{\lambda}
\newcommand{\lad}{\lambda^{12}}

\newcommand{\kip}{\chi_p}

\newcommand{\Mod}{\mkern-5mu \mod}

\def\Overline #1#2#3%
{\mkern #1mu
\overline {\mkern-#1mu #3 \mkern-#2mu}
\mkern #2mu }

\newtheorem*{TheoSerre}{Th\'eor\`eme (Serre, \cite{[Se]})}

\newtheorem*{TheoHommodpA}{Th\'eor\`eme}
\newtheorem*{TheoHommodpAQ}{Th\'eor\`eme pour $\Q$}
\newtheorem*{TheoHommodpA_Irr1}{Th\'eor\`eme dans le cas irr\'eductible (I)}
\newtheorem*{TheoHommodpA_Irr2}{Th\'eor\`eme dans le cas irr\'eductible (II)}
\newtheorem*{TheoHommodpA_Red}{Th\'eor\`eme dans le cas r\'eductible}

\newtheorem*{PropExcep}{Proposition}

\newtheorem*{Ssgrpesmax}{Sous-groupes maximaux de $\GL_2(\Fp)$}
\newtheorem*{Coro}{Corollaire}

\theoremstyle{plain}
\newtheorem{theoreme}{Th\'eor\`eme}[section]
\newtheorem{proposition}[theoreme]{Proposition}
\newtheorem{corollaire}[theoreme]{Corollaire}
\newtheorem{lemme}[theoreme]{Lemme}
\theoremstyle{definition}

\newtheorem{remarque}[theoreme]{Remarque}

\newtheorem{notations}[theoreme]{Notations}
\newtheorem{hypotheses}[theoreme]{Hypoth\`eses}

\AtBeginDocument{}

\DeclareMathAlphabet{\mathpzc}{OT1}{pzc}{m}{it}

% See the ``Article customise'' template for come common customisations

\title[Borne uniforme pour les homoth\'eties]{Borne uniforme pour les homoth\'eties dans l'image de Galois associ\'ee aux courbes elliptiques}
\author[Agn\`es David]{Agn\`es David}
\address{\'ENS de Lyon -- UMPA \\
CNRS -- UMR 5669 \\
46 all\'ee d'Italie \\
69364 Lyon Cedex 07,
France
}
\email{Agnes.David@ens-lyon.org}
%\date{} % delete this line to display the current date

%%% BEGIN DOCUMENT
\begin{document}

\begin{abstract}
Let~$K$ be a fixed number field and~$G_K$ its absolute Galois group. We give a bound~$C(K)$, depending only on the degree, the class number and the discriminant of~$K$, such that for any elliptic curve~$E$ defined over~$K$ and any prime number~$p$ strictly larger than~$C(K)$, the image of the representation of~$G_K$ attached to the $p$-torsion points of~$E$ contains a subgroup of homotheties of index smaller than~$12$.
\end{abstract}

\maketitle
\tableofcontents

\section*{Introduction}

L'objet de cet article est de d\'emontrer un r\'esultat asymptotique et uniforme sur la taille du sous-groupe des homoth\'eties contenu dans l'image de la repr\'esentation associ\'ee aux points de torsion d'une courbe elliptique.

Le cadre pr\'ecis est le suivant.
On fixe un corps de nombres~$K$~; on en fixe \'egalement une cl\^oture alg\'ebrique~$\aK$ et on note~$G_K$ le groupe de Galois de~$\aK$ sur~$K$.
On d\'esigne par~$d$ le degr\'e de~$K$ sur~$\Q$ et~$h$ son nombre de classes d'id\'eaux.
On fixe une courbe elliptique~$E$ d\'efinie sur~$K$ et~$p$ un nombre premier sup\'erieur ou \'egal \`a~$5$.
On note~$E_p$ l'ensemble des points de~$E$ dans~$\aK$ qui sont de~$p$-torsion~;  c'est un espace vectoriel de dimension~$2$ sur~$\Fp$, sur lequel le groupe de Galois absolu~$G_K$ de~$K$ agit~$\Fp$-lin\'eairement.
On d\'esigne par~$\phip$ la repr\'esentation de~$G_K$ ainsi obtenue~; elle prend ses valeurs dans le groupe~$\GL(E_p)$ qui, apr\`es choix d'une base pour~$E_p$, est isomorphe \`a~$\GL_2(\Fp)$~; on note~$G_p$ l'image dans~$\GL(E_p)$ de la repr\'esentation~$\phip$.

Le groupe~$G_p$ est isomorphe au groupe de Galois de l'extension du corps~$K$ obtenue en lui adjoignant les coordonn\'ees des points de~$p$-torsion de~$E$ dans~$\aK$.
Cette extension est \og asymptotiquement grosse \fg\ au sens du th\'eor\`eme suivant.
\begin{TheoSerre}
 On suppose que la courbe $E$ n'a pas de multiplication complexe. Alors il existe une borne $C(K,E)$, ne d\'ependant que de $K$ et de $E$ et telle que pour tout $p$ strictement sup\'erieur \`a $C(K,E)$, la repr\'esentation $\phip$ est surjective.
\end{TheoSerre}
Des formes explicites pour la borne~$C(K,E)$ ont \'et\'e \'etablies par Masser et W\"ustholz dans \cite{[MW]} (voir \'egalement \cite{[Pel]}).

La question, pos\'ee dans~\cite{[Se]}, d'\'eliminer la d\'ependance en la courbe elliptique~$E$ dans la borne~$C(K,E)$, pour obtenir une version uniforme du th\'eor\`eme, s'est r\'ev\'el\'ee ardue.
D'importants r\'esultats ont n\'eanmoins \'et\'e obtenus dans cette direction lorsque le corps de base est celui des rationnels~$\Q$.
Mazur a ainsi montr\'e dans~\cite{[Maz]} que, si~$p$ n'est pas dans l'ensemble~$\{2, 3, 5, 7, 13, 11, 17, 19, 37, 43, 67, 163 \}$, alors la repr\'esentation~$\phip$ est irr\'eductible.
Plus r\'ecemment, Bilu et Parent (\cite{[PaBi]}) ont \'etabli l'existence  d'une borne absolue pour les nombres premiers~$p$ pour lesquels $G_p$ est inclus dans le normalisateur d'un sous-groupe de Cartan d\'eploy\'e (voir partie \ref{sssec:descr_Cdep} pour la d\'efinition), lorsque~$E$ n'a pas de multiplication complexe.
Dans le domaine des r\'esultats uniformes sur les points de torsion de courbes elliptiques, on mentionne \'egalement  (et on utilisera dans la partie~\ref{sec:red}) les travaux de Merel (\cite{[Mer96]}) et Parent (\cite{[Pa]}), qui permettent de borner l'ordre d'un point de torsion de~$E$ en fonction du degr\'e du corps de nombres sur lequel il est d\'efini.

Sans condition sur l'anneau des endomorphismes de la courbe~$E$, on peut prouver l'existence une borne~$C'(K,E)$ telle que, si~$p$ est strictement sup\'erieur \`a~$C'(K,E)$, alors l'image~$G_p$ de~$\phip$ contient les carr\'es des homoth\'eties de~$\GL(E_p)$. Le but de cet article est d'obtenir une version uniforme de ce r\'esultat.

Des questions semblables (asymptotiques et uniformes) sur les homoth\'eties ont \'et\'e trait\'ees par Eckstein dans sa th\`ese~(\cite{[Eck05]}), y compris pour la repr\'esentation associ\'ee au module de Tate de la courbe~$E$ et pour des vari\'et\'es ab\'eliennes \`a multiplications complexes de dimension quelconque.
N\'eanmoins, un cas, o\`u la repr\'esentation~$\phip$ est r\'eductible,  n'est pas couvert par le r\'esultat \'enonc\'e dans~\cite{[Eck05]} (voir th\'eor\`eme~18 ou \S 2.6.3).
Par ailleurs, les m\'ethodes employ\'ees ici diff\`erent en grande partie de celles de~\cite{[Eck05]}.
Les th\'eor\`emes ci-dessous apportent ainsi une r\'eponse compl\`ete et, dans certains cas, plus pr\'ecise \`a la question des homoth\'eties contenues dans l'image de la repr\'esentation~$\phip$.

\begin{TheoHommodpA}
On suppose que le corps~$K$ est diff\'erent de~$\Q$ et que le nombre premier~$p$ est non ramifi\'e dans~$K$.
\begin{itemize}
\item[$\bullet$] On suppose la repr\'esentation~$\phip$ irr\'eductible. Si~$p$ est sup\'erieur ou \'egal \`a~$17$, alors~$G_p$ contient les carr\'es des homoth\'eties. Si de plus~$p$ est congru \`a~$1$ modulo~$4$, alors~$G_p$ contient toutes les homoth\'eties.
\item[$\bullet$] On suppose la repr\'esentation $\phip$ r\'eductible. Si $p$ est strictement sup\'erieur \`a $(1 + 3^{6dh})^2$, alors $G_p$ contient un sous-groupe d'indice divisant~$8$ ou~$12$ des homoth\'eties.
\end{itemize}
\end{TheoHommodpA}

Dans un article \`a venir (\cite{[Dav10]}), on donnera un r\'esultat plus pr\'ecis pour le cas o\`u la repr\'esentation $\phip$ est r\'eductible~: en poursuivant les m\'ethodes de \cite{[Mom]} (et quitte \`a augmenter la borne inf\'erieure pour~$p$), on montrera que~$G_p$ contient tous les carr\'es des homoth\'eties, sauf dans un cas pr\'ecis o\`u le corps~$K$ contient le corps de classes de Hilbert d'un corps quadratique imaginaire et o\`u la repr\'esentation~$\phip$ pr\'esente de grandes similarit\'es avec celle qui proviendrait d'une courbe elliptique ayant des multiplications complexes.

Dans le th\'eor\`eme ci-dessus, on ne suppose~$K$ diff\'erent de~$\Q$ que pour utiliser, dans le cas d'une repr\'esentation~$\phip$ r\'eductible, les bornes sur l'ordre des points de torsion de~$E$ qui figurent dans \cite{[Mer96]} ou \cite{[Pa]}.
Lorsque le corps de base est~$\Q$, les m\^eme m\'ethodes, associ\'ees au r\'esultat de Mazur (\cite{[Maz]}) sur l'irr\'eductibilit\'e de la repr\'esentation~$\phip$, donnent l'\'enonc\'e suivant.
\begin{TheoHommodpAQ}
On suppose que le corps $K$ est \'egal \`a $\Q$. Si $p$ est sup\'erieur ou \'egal \`a $23$ et diff\'erent de $37$, $43$, $67$ et $163$, alors~$G_p$ contient tous les carr\'es des homoth\'eties.
Si de plus $p$ est congru \`a $1$ modulo $4$, alors $G_p$ contient toutes les homoth\'eties.
\end{TheoHommodpAQ}

Au cours de la d\'emonstration du th\'eor\`eme dans le cas irr\'eductible (partie \ref{subsec:excep}), on a explicit\'e la proposition suivante, g\'en\'eralisation \`a un corps de base~$K$ quelconque du lemme 18 (\S 8.4) de~\cite{[CheEff]} valable sur~$\Q$.

\begin{PropExcep}\label{propintro:excep}
On suppose que $p$ est strictement sup\'erieur \`a $20d + 1$. Alors~$G_p$ n'est pas un sous-groupe exceptionnel.
\end{PropExcep}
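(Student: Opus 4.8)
\emph{Stratégie.} Rappelons qu'un sous-groupe de $\GL(E_p)$ est exceptionnel lorsque son image dans $\PGL_2(\Fp)$ est isomorphe à $A_4$, $S_4$ ou $A_5$~; dans ces trois groupes, tout sous-groupe cyclique est d'ordre au plus $5$. La stratégie consiste à contredire cette majoration en exhibant dans $G_p$ un élément, issu de l'inertie en une place au-dessus de $p$, dont l'ordre projectif croît linéairement en $p$. Je fixe donc une place $\p$ de $K$ au-dessus de $p$, de groupe d'inertie $\Ip$ et d'indice de ramification $\ep = e(\p/p)$, de sorte que $\ep \le d$. Comme $p \ge 5$, la partie sauvage de $\Ip$ (pro-$p$) agit trivialement sur $E_p$~; l'action de $\Ip$ se factorise par son quotient modéré, et l'image projective de $\phip(\Ip)$ est cyclique, d'ordre $n_\p$ que l'on cherche à minorer.

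\emph{Cas de bonne réduction ou de réduction multiplicative.} Lorsque $E$ a bonne réduction ordinaire ou réduction multiplicative en $\p$, la théorie des caractères fondamentaux de Serre donne $\phip|_{\Ip} \sim \begin{pmatrix} \kip & * \\ 0 & 1\end{pmatrix}$, de sorte que $n_\p$ est l'ordre de $\kip|_{\Ip}$, à savoir $(p-1)/\gcd(p-1,\ep)$, d'où $n_\p \ge (p-1)/d$. Dans le cas supersingulier, $\phip|_{\Ip}$ est donnée par les caractères fondamentaux de niveau $2$ et $n_\p$ est de l'ordre de $p+1$. Dans ces situations, $n_\p$ dépasse largement $5$, ce qui est déjà incompatible avec le caractère exceptionnel de $G_p$.

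\emph{Cas de réduction additive à réduction potentiellement bonne.} C'est le cas délicat, et l'obstacle principal. La courbe acquiert bonne réduction sur une extension $L_\p/\Kp$ modérément ramifiée (car $p \ge 5$), de degré $e'$ borné par l'ordre de l'extension trivialisant la réduction. En se ramenant au cas précédent pour le sous-groupe $I_{L_\p} \subseteq \Ip$, l'ordre projectif de $\phip(I_{L_\p})$ vaut au moins $(p-1)/e_L$, où $e_L = e'\,\ep$ est l'indice de ramification de $L_\p$ sur $\Qp$. Le contrôle simultané de $e'$ (par la classification des types de Kodaira à réduction potentiellement bonne pour $p \ge 5$) et de $\ep \le d$ fournit, après analyse cas par cas, une minoration de la forme $n_\p \ge (p-1)/(\kappa\, d)$ pour une constante $\kappa$~; le suivi précis de la ramification, le cas le plus contraignant étant celui de la réduction additive, donne $\kappa = 4$.

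\emph{Conclusion.} En confrontant la minoration $n_\p \ge (p-1)/(4d)$ à la majoration $n_\p \le 5$ imposée par la structure exceptionnelle, on obtient $(p-1)/(4d) \le 5$, soit $p \le 20d+1$. Sous l'hypothèse $p > 20d+1$, le groupe $G_p$ ne peut donc être exceptionnel. La difficulté essentielle réside dans le troisième cas~: il faut borner uniformément le degré de l'extension rendant la réduction bonne et suivre exactement la ramification pour obtenir la constante annoncée, là où sur $\Q$ (lemme 18 de \cite{[CheEff]}) la place $p$ est automatiquement non ramifiée et seule intervient la contribution du twist.
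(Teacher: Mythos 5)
Votre strat\'egie est pour l'essentiel celle du texte (lemme~\ref{lem:excep}, proposition~\ref{prop:excep} et corollaire~\ref{cor:excep})~: exhiber dans l'image projective de l'inertie en une place $\p$ au-dessus de $p$ un \'el\'ement d'ordre au moins $\frac{p-1}{4e_p}$, puis confronter cet ordre \`a la borne $5$ valable pour les \'el\'ements de $\mathfrak{A}_4$, $\mathfrak{S}_4$ et $\mathfrak{A}_5$. Mais le point qui fournit la constante $\kappa=4$ --- et donc la borne $20d+1$ plut\^ot que $30d+1$ --- est pr\'ecis\'ement celui que vous laissez en blanc. Dans le cas de r\'eduction potentiellement bonne, l'extension $\Kp'$ r\'ealisant la bonne r\'eduction a un indice de ramification sur $\Kp$ divisant $2$, $4$ ou $6$~; votre \og analyse cas par cas \fg\ donne donc au pire $\kappa=6$ et la conclusion seulement pour $p>30d+1$. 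L'id\'ee manquante est de tordre $E$ par un caract\`ere quadratique, ce qui ne change pas l'image de $G_p$ dans $\PGL(E_p)$ mais permet de ramener un indice de ramification $6$ \`a $3$, d'o\`u un indice major\'e par $4$. Votre remarque finale, selon laquelle la contribution du twist n'intervient que sur $\Q$, est \`a rebours de la situation~: c'est exactement l'ingr\'edient qui manque ici pour obtenir la constante annonc\'ee.

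Deux points secondaires. D'abord, l'affirmation que la partie sauvage de $\Ip$ agit trivialement sur $E_p$ (et donc que l'image projective de $\phip(\Ip)$ est cyclique) est fausse en g\'en\'eral~: dans le cas supersingulier o\`u le polygone de Newton de la multiplication par $p$ a deux pentes distinctes, l'image de l'inertie contient un \'el\'ement d'ordre $p$. Cela ne compromet pas la conclusion (un tel \'el\'ement exclut aussi l'exceptionnalit\'e d\`es que $p>5$), mais ce cas doit \^etre trait\'e s\'epar\'ement et non via les caract\`eres fondamentaux de niveau $2$. Ensuite, le cas o\`u l'invariant $j$ n'est pas entier en $\p$ mais o\`u la r\'eduction sur $\Kp$ est additive (r\'eduction potentiellement multiplicative) ne rel\`eve d'aucune de vos trois rubriques~; il se traite par une extension quadratique et contribue un facteur $2$, absorb\'e par la constante $4$.
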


Le th\'eor\`eme a par ailleurs  comme cons\'equence la borne inf\'erieure uniforme suivante pour le nombre de points qui sont \`a la fois multiples et conjugu\'es galoisiens d'un m\^eme point d'ordre $p$ (la formulation de ce corollaire est inspir\'ee de la partie 3 de~\cite{[Lan65]})~:
\begin{Coro}
On suppose~$p$ non ramifi\'e dans~$K$ et strictement sup\'erieur \`a~$(1 + 3^{6dh})^2$.
Soit~$P$ un point de~$E$ dans~$\aK$, d'ordre $p$.
Alors l'ensemble des points du sous-groupe de~$E_p$ engendr\'e par~$P$ qui sont aussi dans l'orbite de~$P$ sous l'action de~$G_K$ est de cardinal sup\'erieur ou \'egal \`a~$\frac{p-1}{12}$.
\end{Coro}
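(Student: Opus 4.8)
Le plan est d'exploiter directement le sous-groupe d'homoth\'eties que fournit le th\'eor\`eme principal, la seule difficult\'e restante \'etant de nature combinatoire. On commence par noter $Z \subset \GL(E_p)$ le groupe des homoth\'eties~: c'est l'image de $\Fpx$ par l'application $\lambda \mapsto \lambda \cdot \mathrm{Id}$, donc un groupe cyclique d'ordre $p-1$. On pose $H = G_p \cap Z$. Comme $p$ est non ramifi\'e dans $K$ et strictement sup\'erieur \`a $(1 + 3^{6dh})^2$, lui-m\^eme sup\'erieur \`a $17$ puisque $d \geq 1$ et $h \geq 1$, le th\'eor\`eme s'applique dans les deux cas possibles~: si $\phip$ est irr\'eductible, $H$ contient les carr\'es des homoth\'eties et $[Z : H]$ divise $2$~; si $\phip$ est r\'eductible, $[Z : H]$ divise $8$ ou $12$. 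Dans tous les cas $[Z : H] \leq 12$, et puisque cet indice divise $p-1$ on en tire $|H| \geq \frac{p-1}{12}$.

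L'\'etape suivante consiste \`a traduire chaque homoth\'etie de $H$ en un point de l'intersection consid\'er\'ee. \`A une homoth\'etie $\lambda \cdot \mathrm{Id}$ de $H$ on associe, par d\'efinition de $G_p$, un \'el\'ement $\sigma \in G_K$ tel que $\phip(\sigma) = \lambda \cdot \mathrm{Id}$, de sorte que $\sigma(P) = \lambda P$. Ce point $\lambda P$ est un multiple de $P$, donc appartient au sous-groupe de $E_p$ engendr\'e par $P$~; c'est simultan\'ement un conjugu\'e de $P$ sous $G_K$, donc un \'el\'ement de l'orbite de $P$. Il est par cons\'equent dans l'ensemble d\'ecrit par l'\'enonc\'e.

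Il ne reste qu'\`a compter ces points. Puisque $P$ est d'ordre exactement $p$, l'application $\lambda \mapsto \lambda P$ est injective sur $\Fp$~; les $|H|$ valeurs distinctes de $\lambda$ pour lesquelles $\lambda \cdot \mathrm{Id}$ appartient \`a $H$ donnent donc $|H|$ points $\lambda P$ deux \`a deux distincts, tous situ\'es dans l'intersection du sous-groupe engendr\'e par $P$ et de l'orbite de $P$ sous $G_K$. Le cardinal de cette intersection est ainsi minor\'e par $|H| \geq \frac{p-1}{12}$, ce qui ach\`eve la preuve.

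Comme cet argument n'est pour l'essentiel qu'une lecture du th\'eor\`eme, je ne m'attends pas \`a rencontrer d'obstacle s\'erieux. Le point demandant le plus de soin est de v\'erifier que l'hypoth\`ese num\'erique sur $p$ recouvre uniform\'ement les deux cas~: elle assure \`a la fois la condition $p \geq 17$ du cas irr\'eductible et l'applicabilit\'e du cas r\'eductible, et la borne uniforme $12$ domine bien les indices $2$, $8$ et $12$ qui apparaissent dans ces deux situations.
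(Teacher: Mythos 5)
Votre preuve est correcte et correspond exactement \`a la d\'eduction que l'article sous-entend (le corollaire y est pr\'esent\'e comme cons\'equence imm\'ediate du th\'eor\`eme, sans d\'emonstration d\'etaill\'ee)~: on extrait de $G_p$ un sous-groupe d'homoth\'eties d'indice au plus $12$ dans le groupe de toutes les homoth\'eties, puis on envoie injectivement chaque rapport $\lambda$ sur le point $\lambda P$, qui est \`a la fois multiple de $P$ et conjugu\'e galoisien de $P$. Le seul point que vous pourriez expliciter davantage est le cas $K=\Q$ (exclu des hypoth\`eses du th\'eor\`eme principal mais couvert par l'\'enonc\'e pour $\Q$, la borne $(1+3^{6dh})^2$ \'etant largement sup\'erieure aux primes exceptionnels de Mazur), mais cela ne change rien \`a la validit\'e de l'argument.
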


L'\'equivalent du th\'eor\`eme ci-dessus pour la repr\'esentation de~$G_K$ associ\'ee au module de Tate de la courbe~$E$ fournirait une borne inf\'erieure uniforme semblable pour tous les points dont l'ordre est une puissance de~$p$.
Eckstein a obtenu de tels r\'esultats lorsque la repr\'esentation~$\phip$ est irr\'eductible et dans certains cas o\`u elle est r\'eductible (th\'eor\`eme~18 de~\cite{[Eck05]}~; voir \'egalement~\cite{[JPW]} pour une version non uniforme).

On distingue pour la d\'emonstration du th\'eor\`eme les diff\'erents  types de sous-groupes maximaux de~$\GL(E_p)$ pouvant contenir l'image de la repr\'esentation~$\phip$. La liste et les propri\'et\'es de ces sous-groupes, \'etablie dans  \cite{[Se]}, est rappel\'ee dans la partie \ref{sec:Ssgrpesmax}.

Dans la partie~\ref{sec:irr}, on traite le cas o\`u la repr\'esentation~$\phip$ est irr\'eductible. On y d\'emontre (partie~\ref{subsec:excep}) la proposition \'enonc\'ee ci-dessus. Lorsque l'image~$G_p$ de~$\phip$ est incluse dans le normalisateur d'un sous-groupe de Cartan (partie~\ref{subsec:NCartan}), on constate que la structure d'un tel groupe, alli\'ee \`a une hypoth\`ese de surjectivit\'e du d\'eterminant de~$\phip$, suffit \`a  assurer la pr\'esence dans~$G_p$ des carr\'es des homoth\'eties.

Dans la partie~\ref{sec:red}, on traite le cas o\`u la repr\'esentation~$\phip$ est r\'eductible. On s'inspire pour cela de l'\'etude du caract\`ere d'isog\'enie men\'ee par Mazur (\cite{[Maz]}) puis Momose (\cite{[Mom]}).
Apr\`es un rappel de ses propri\'et\'es locales (parties \ref{ssec:Inerenp} et \ref{subsec:horsp}), on observe, gr\^ace \`a la th\'eorie du corps de classes, que l'obstacle principal \`a la pr\'esence d'un gros sous-groupe des homoth\'eties dans~$G_p$ est l'existence d'un point de la courbe~$E$ d\'efini sur une \og petite \fg\  extension du corps de base~$K$. Les nombres premiers~$p$ pour lesquels une telle situation survient sont alors contr\^ol\'es par les bornes uniformes (\cite{[Mer96]}, \cite{[Pa]}) pour l'ordre des points de torsion des courbes elliptiques en fonction du degr\'e du corps de base.

Je tiens \`a remercier ici Jean-Pierre Wintenberger, qui a initi\'e et encadr\'e ce travail, ainsi que John Boxall pour m'avoir indiqu\'e les arguments de la partie~\ref{subsec:NCartan} et les fructueuses discussions lors de la r\'edaction de cet article.

\section{Sous-groupes maximaux de $\GL_2(\Fp)$}\label{sec:Ssgrpesmax}

Soit $V$ un espace vectoriel de dimension $2$ sur $\Fp$. 
Serre a d\'ecrit dans \cite{[Se]} (\S2) les sous-groupes maximaux du groupe $\GL(V)$ (qui est isomorphe \`a $\GL_2(\Fp)$)~;
pour la commodit\'e du lecteur, on rappelle ici les diff\'erents types de tels sous-groupes et celles de leurs propri\'et\'es qui serviront dans la suite du texte.

\subsection{Sous-groupes exceptionnels}

Un sous-groupe de~$\GL(V)$ est dit exceptionnel~(\cite{[Se]} \S 2.5) si son image dans~$\PGL(V)$ est isomorphe \`a l'un des groupes altern\'es ou sym\'etrique~$\mathfrak{A}_4$,~$\mathfrak{S}_4$ ou~$\mathfrak{A}_5$. On remarque que cette image est alors form\'ee d'\'el\'ements d'ordre inf\'erieur ou \'egal \`a~$5$.

\subsection{Sous-groupes de Cartan et leurs normalisateurs}

\subsubsection{D\'eploy\'e}\label{sssec:descr_Cdep}

Soient $D_1$ et $D_2$ deux droites distinctes  de~$V$. 
Les \'el\'ements de~$\GL(V)$ qui fixent (globalement)~$D_1$ d'une part et $D_2$ d'autre part forment un sous-groupe~$C_d$ de~$\GL(V)$, appel\'e sous-groupe de Cartan d\'eploy\'e (associ\'e \`a~$D_1$ et~$D_2$). Un tel sous-groupe est d'indice~$2$ dans son normalisateur~$N(C_d)$~; les \'el\'ements de~$N(C_d)$ qui ne sont pas dans $C_d$ sont alors tous les \'el\'ements de $\GL(V)$ qui \'echangent $D_1$ et $D_2$~(\cite{[Se]} \S 2.1.a et \S2.2).

Dans une base de $V$ dont les vecteurs engendrent $D_1$ et $D_2$, $C_d$ est form\'e des matrices diagonales~; il est ab\'elien de type $(p-1,p-1)$~; dans cette m\^eme base, les \'el\'ements de $N(C_d)\backslash C_d$ sont les matrices anti-diagonales.

L'ensemble des \'el\'ements de $C_d$ qui op\'erent trivialement (point par point) sur $D_1$ forme un sous-groupe, appel\'e demi-sous-groupe de Cartan d\'eploy\'e. Un tel sous-groupe est cyclique, d'ordre $p-1$~; dans une base choisie comme pr\'ec\'edemment, il est form\'e des matrices diagonales dont le premier terme diagonal vaut $1$. 

\subsubsection{Non d\'eploy\'e}\label{sssec:descr_Cnondep}

Soit $A$ une sous-$\Fp$-alg\`ebre de~$\mathrm{End}(V)$ qui est un corps \`a $p^2$ \'el\'ements. Alors le groupe multiplicatif $C_{nd}$ de $A$ est un sous-groupe de~$\GL(V)$, appel\'e sous-groupe de Cartan non d\'eploy\'e. Un tel sous-groupe est cyclique et de cardinal~$p^2 - 1$~; il est d'indice $2$ dans son normalisateur $N(C_{nd})$ et les \'el\'ements de~$N(C_{nd}) \backslash C_{nd}$ sont les \'el\'ements de~$\GL(V)$ qui agissent par conjugaison sur $C_{nd}$ par \'el\'evation \`a la puissance~$p$~(\cite{[Se]} \S 2.1.b et \S2.2).

En termes matriciels, il existe un \'el\'ement $\alpha$ non carr\'e de $\Fpx$ et une base de $V$ dans laquelle $C_{nd}$ est form\'e des matrices~:
$$
\left\{
\begin{pmatrix}
a & b \alpha\\
b & a \\
\end{pmatrix},
(a,b) \in \Fp^2 \backslash \{(0,0)\}
\right\}.
$$
Dans cette m\^eme base, les \'el\'ements de $N(C_{nd}) \backslash C_{nd}$ sont alors les matrices~:
$$
\left\{
\begin{pmatrix}
a & - b \alpha\\
b & - a \\
\end{pmatrix},
(a,b) \in \Fp^2 \backslash \{(0,0)\}
\right\}.
$$

\subsection{Sous-groupe de Borel}

Soit $D$ une droite de $V$. Les \'el\'ements de $\GL(V)$ qui stabilisent $D$ forment un sous-groupe de  $\GL(V)$, appel\'e sous-groupe de Borel. Un tel sous-groupe est de cardinal $p(p-1)^2$~; dans une base de $V$ dont le premier vecteur engendre $D$, il est constitu\'e des matrices triangulaires sup\'erieures~(\cite{[Se]} \S 2.3).

\subsection{Classification}

La classification suivante regroupe la proposition 15 (\S 2.4) et la partie 2.6 de \cite{[Se]}.

\begin{Ssgrpesmax}
Soit $G$ un sous-groupe de $\GL(V)$.
\begin{enumerate}
\item Si l'ordre de $G$ est premier \`a $p$, alors on est dans l'un des deux cas suivants~:
	\begin{enumerate}
	\item $G$ est un sous-groupe exceptionnel~;
	\item $G$ est contenu dans le normalisateur d'un sous-groupe de Cartan.
	\end{enumerate}
\item Si $p$ divise l'ordre de $G$ alors on est dans l'un des deux cas suivants~:
	\begin{enumerate}
	\item $G$ contient $\SL(V)$~;
	\item $G$ est contenu dans un sous-groupe de Borel.
	\end{enumerate}
\end{enumerate}
\end{Ssgrpesmax}

On recherchera donc des homoth\'eties dans l'image de la repr\'esentation~$\phip$ en distinguant selon les cas de cette classification.

\section{Cas d'une repr\'esentation irr\'eductible}\label{sec:irr}

Dans cette partie, on traite le cas o\`u la repr\'esentation $\phip$ est irr\'eductible~; les r\'esultats obtenus sont r\'esum\'es dans la partie \ref{ssec:ResuIrr}.

Dans la forme finale du th\'eor\`eme de l'introduction, le nombre premier~$p$ est suppos\'e non ramifi\'e dans le corps $K$.
On s'est n\'eanmoins attach\'e \`a \'etablir aussi, dans la mesure du possible, des \'enonc\'es ne d\'ependant que du degr\'e de~$K$ (proposition~\ref{prop:SL},~\ref{prop:HomCdep} et~\ref{prop:HomCnondep}, corollaire~\ref{cor:excep}) et th\'eor\`eme 1 de la partie~\ref{ssec:ResuIrr}).

\begin{notations}\label{not:introIrr}\ 
\begin{enumerate}
\item On note $e_p$ le plus petit indice de ramification dans l'extension~$K/\Q$ d'une place de~$K$ au-dessus de $p$~; il est born\'e par le degr\'e $d$ du corps $K$. 
\item Le d\'eterminant de la repr\'esentation~$\phip$ est le caract\`ere cyclotomique (\cite{[Se]}, page~273). On note $\delta_p$ l'ordre du sous-groupe de $\Fpx$ image de $G_p$ par le d\'eterminant~;~$\delta_p$ est \'egal au degr\'e de l'extension~$K(\mu_p)/K$, qui est aussi celui de l'extension~$\Q(\mu_p) / (\Q(\mu_p) \cap K)$ ($\mu_p$ d\'esignant l'ensembles des racines $p$-i\`emes de l'unit\'e dans $\C$). Ainsi,~$\delta_p$ vaut~$p-1$ si et seulement si les corps $K$ et $\Q(\mu_p)$ sont lin\'eairement disjoints~; l'extension~$\Q(\mu_p) / \Q$ \'etant totalement ramifi\'ee en~$p$, c'est notamment le cas si le nombre premier~$p$ est non ramifi\'e dans~$K$. En g\'en\'eral,~$\delta_p$ est divisible par~$\frac{p-1}{\mathrm{pgcd}(e_p, p-1)}$~; il poss\`ede donc un diviseur sup\'erieur ou \'egal \`a~$\frac{p-1}{d}$.
\end{enumerate}
\end{notations}

\subsection{Sous-groupe d'ordre divisible par $p$}

\begin{proposition}\label{prop:SL}
On suppose que~$G_p$ contient~$\SL(E_p)$. Alors $G_p$ contient l'image r\'eciproque par le d\'eterminant de $\det (G_p)$~; en particulier, $G_p$ contient le sous-groupe des homoth\'eties d'ordre $2 \mathrm{pgcd} \left(\delta_p , \frac{p-1}{2}\right)$, qui est sup\'erieur ou \'egal \`a~$\frac{p-1}{d}$.
\end{proposition}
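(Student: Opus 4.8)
The plan is to work entirely with the exact sequence
$$1 \longrightarrow \SL(E_p) \longrightarrow \GL(E_p) \stackrel{\det}{\longrightarrow} \Fpx \longrightarrow 1$$
and with the cyclic structure of $\Fpx$. First I would settle the preimage statement. Since by hypothesis $\SL(E_p) = \Ker(\det)$ is contained in $G_p$, the subgroup $G_p$ is saturated with respect to the determinant: if $g \in \GL(E_p)$ satisfies $\det(g) \in \det(G_p)$, one picks $h \in G_p$ with $\det(h) = \det(g)$; then $g h^{-1} \in \SL(E_p) \subseteq G_p$, so $g \in G_p$. Hence $G_p = \det^{-1}(\det(G_p))$, which is the first assertion (this is nothing but the correspondence between subgroups of $\Fpx$ and subgroups of $\GL(E_p)$ containing $\SL(E_p)$).

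Next I would read off the homotheties. Write $\Delta = \det(G_p)$, the subgroup of $\Fpx$ of order $\delta_p$, and identify the group of homotheties with $\Fpx$ via $\lambda \mapsto \lambda\,\mathrm{Id}$. Because $E_p$ has dimension $2$, the determinant of the homothety $\lambda\,\mathrm{Id}$ is $\lambda^2$, so the determinant restricted to homotheties is the squaring map $s \colon \Fpx \to \Fpx$, $s(\lambda) = \lambda^2$. By the previous step, $\lambda\,\mathrm{Id} \in G_p$ if and only if $\lambda^2 \in \Delta$; thus the subgroup of homotheties contained in $G_p$ corresponds to $H = s^{-1}(\Delta) = \{\lambda \in \Fpx : \lambda^2 \in \Delta\}$.

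To compute $|H|$ I would use the relation $|s^{-1}(\Delta)| = |\Ker(s)| \cdot |\Delta \cap s(\Fpx)|$. Here $\Ker(s) = \{\pm 1\}$ has order $2$ since $p$ is odd, and $s(\Fpx) = (\Fpx)^2$ is the subgroup of squares, of order $\frac{p-1}{2}$. As $\Fpx$ is cyclic, the intersection of its two subgroups $\Delta$ (order $\delta_p$) and $(\Fpx)^2$ (order $\frac{p-1}{2}$) is the unique subgroup of order $\gcd\!\left(\delta_p, \frac{p-1}{2}\right)$. This yields $|H| = 2\gcd\!\left(\delta_p, \frac{p-1}{2}\right)$, the announced order.

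Finally, for the lower bound I would compare $2$-adic valuations: since $\delta_p$ divides $p-1$, one checks that $2\gcd\!\left(\delta_p, \frac{p-1}{2}\right) \geq \delta_p$ (the factor $2$ exactly compensates the possible loss of one power of $2$ in the gcd). By Notations \ref{not:introIrr}, $\delta_p$ is divisible by $\frac{p-1}{\gcd(e_p, p-1)}$, which is at least $\frac{p-1}{d}$ because $\gcd(e_p, p-1) \leq e_p \leq d$; hence $\delta_p \geq \frac{p-1}{d}$, and therefore $|H| \geq \delta_p \geq \frac{p-1}{d}$. The only mildly technical point is the order computation of the third step, where one must keep careful track of the factor $2$ coming from $\Ker(s)$ and from the intersection with the squares; the remaining steps are formal group theory combined with the input on $\delta_p$ recalled in the notations.
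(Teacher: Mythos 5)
Your proof is correct and follows essentially the same route as the paper: the saturation of $G_p$ under $\det$ coming from $\SL(E_p)\subseteq G_p$, then the identification of the homotheties in $G_p$ with the $\lambda$ such that $\lambda^2\in\det(G_p)$ and the count $2\gcd\!\left(\delta_p,\tfrac{p-1}{2}\right)$, which the paper states without detail. You have merely written out the order computation and the comparison with $\delta_p$ that the paper's one-line proof leaves implicit.
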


\begin{proof}
Le sous-groupe des homoth\'eties dont le d\'eterminant est dans~$\det (G_p)$ est exactement celui d'ordre~$2 \mathrm{pgcd} \left(\delta_p , \frac{p-1}{2}\right)$, cet ordre \'etant \'egal \`a~$2 \delta_p$ si~$\delta_p$ divise~$\frac{p-1}{2}$ et \`a~$\delta_p$ sinon.
\end{proof}

\begin{corollaire}
On suppose que le d\'eterminant de~$\phip$ est surjectif dans~$\Fpx$  et que~$G_p$ contient~$\SL(E_p)$~; alors la repr\'esentation~$\phip$ est surjective.
\end{corollaire}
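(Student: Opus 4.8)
L'idée est d'exploiter directement la proposition~\ref{prop:SL} qui vient d'être établie. Je commencerais par rappeler la structure de l'application déterminant $\det \colon \GL(E_p) \to \Fpx$ : c'est un morphisme de groupes surjectif, de noyau $\SL(E_p)$. Par conséquent, tout sous-groupe de $\GL(E_p)$ contenant $\SL(E_p)$ est une réunion de classes modulo $\SL(E_p)$, et se trouve donc entièrement déterminé par son image par le déterminant ; c'est précisément cette observation que traduit la proposition~\ref{prop:SL}.

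Puisque $G_p$ contient $\SL(E_p)$ par hypothèse, la proposition~\ref{prop:SL} assure que $G_p$ coïncide avec l'image réciproque par le déterminant de $\det(G_p)$, autrement dit $G_p = \det^{-1}(\det(G_p))$. Par ailleurs, l'hypothèse de surjectivité du déterminant de $\phip$ dans $\Fpx$ signifie exactement que $\det(G_p) = \Fpx$.

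Il ne reste alors qu'à combiner ces deux égalités : on obtient $G_p = \det^{-1}(\Fpx) = \GL(E_p)$, ce qui est précisément la surjectivité de la représentation $\phip$.

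La démonstration ne présente ainsi aucune difficulté sérieuse : il s'agit d'une conséquence formelle et immédiate de la proposition~\ref{prop:SL}, le seul point à expliciter étant la traduction de l'hypothèse de surjectivité du déterminant sous la forme $\det(G_p) = \Fpx$, qui est tautologique. Je m'attends donc à ce que l'énoncé se rédige en quelques lignes, sans recours à un ingrédient supplémentaire.
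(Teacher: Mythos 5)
Votre démonstration est correcte et suit exactement la voie prévue par le texte : le corollaire est une conséquence immédiate de la proposition~\ref{prop:SL} (l'égalité $G_p = \det^{-1}(\det(G_p))$ pour un sous-groupe contenant $\SL(E_p)$, combinée à $\det(G_p)=\Fpx$). Rien à redire.
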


\subsection{Sous-groupes exceptionnels}\label{subsec:excep}

Dans cette partie, on montre que, lorsque~$p$ est strictement sup\'erieur \`a une borne ne d\'ependant que du corps~$K$, l'image~$G_p$ de la repr\'esentation~$\phip$ n'est pas un sous-groupe exceptionnel de~$\GL(E_p)$.
Ce r\'esultat est un analogue du lemme~18 (\S 8.4) de~\cite{[CheEff]} qui est valable sur $\Q$ (les lemmes~26 et~27~(\S 2.1.2)  de~\cite{[Eck05]} pr\'esentent une discussion similaire~; voir aussi dans \cite{[Se]} les arguments de la proposition~17~(\S2.7) et de~\S4.2b)).

\begin{proposition}\label{prop:excep}
On suppose que $p$ est strictement sup\'erieur \`a $20e_p + 1$ ; alors~$G_p$ n'est pas un sous-groupe exceptionnel de~$\GL(E_p)$.
\end{proposition}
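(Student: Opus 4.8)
Le plan est le suivant. Supposons par l'absurde que $G_p$ soit exceptionnel. Par la classification rappelée en partie~\ref{sec:Ssgrpesmax}, l'ordre de $G_p$ est alors premier à~$p$, de sorte que tout élément de $G_p$ est semi-simple (diagonalisable sur une clôture algébrique de~$\Fp$)~; de plus, l'image $\overline{G_p}$ de $G_p$ dans $\PGL(E_p)$ est isomorphe à $\mathfrak{A}_4$, $\mathfrak{S}_4$ ou $\mathfrak{A}_5$, et tous ses éléments sont d'ordre inférieur ou égal à~$5$. L'idée est d'exhiber dans $G_p$ un élément dont l'image dans $\PGL(E_p)$ est d'ordre strictement supérieur à~$5$, ce qui fournira la contradiction voulue. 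On notera qu'une majoration fondée uniquement sur la taille de $\det(G_p)$ ne saurait suffire, puisque $G_p$ peut contenir un grand sous-groupe d'homothéties tout en restant exceptionnel~; la contradiction doit donc provenir de l'arithmétique de la courbe, via l'inertie en une place au-dessus de~$p$, où le déterminant $\chi_p$ de $\phip$ (le caractère cyclotomique) est fortement ramifié.

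Concrètement, on choisirait une place $\p$ de $K$ au-dessus de~$p$ d'indice de ramification $e_{\mathfrak p} = e_p$ minimal, et un générateur $\tau$ de l'image de l'inertie modérée $I_{\mathfrak p}$. Comme dans les Notations~\ref{not:introIrr}, la restriction de $\chi_p = \det \phip$ à $I_{\mathfrak p}$ est d'ordre $(p-1)/\mathrm{pgcd}(e_p, p-1)$, donc au moins $\frac{p-1}{e_p}$. On poserait $g = \phip(\tau)$ et on noterait $\alpha, \beta$ ses valeurs propres dans une clôture algébrique de~$\Fp$~; l'ordre de l'image de $g$ dans $\PGL(E_p)$ est exactement l'ordre $m$ du quotient $\alpha / \beta$, tandis que $\alpha\beta = \chi_p(\tau)$.

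Il resterait à minorer $m$ en fonction de $p$ et $e_p$, en distinguant selon le type de réduction de $E$ en $\p$ et en s'appuyant sur la description par Serre de $\phip|_{I_{\mathfrak p}}$ au moyen des caractères fondamentaux. Dans le cas de réduction ordinaire ou multiplicative, $\phip|_{I_{\mathfrak p}}$ est, à torsion près par un caractère, triangulaire de caractères diagonaux $\chi_p|_{I_{\mathfrak p}}$ et~$1$~; le rapport $\alpha/\beta$ vaut alors $\chi_p(\tau)$, d'ordre $m \ge \frac{p-1}{e_p}$. Dans le cas supersingulier, les caractères fondamentaux intervenant sont de niveau~$2$ et l'ordre de $\alpha/\beta$ se minore de façon analogue par une quantité de l'ordre de $\frac{p+1}{e_p}$. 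Enfin, dans le cas de réduction additive à réduction potentiellement bonne, une torsion par un caractère d'ordre divisant $2$, $3$, $4$ ou~$6$ ramène aux cas précédents et ne divise l'ordre obtenu que par un facteur borné. On obtiendrait ainsi, dans tous les cas, une minoration du type $m \ge \frac{p-1}{20\, e_p}$, de sorte que l'hypothèse $p > 20 e_p + 1$ entraînerait $m > 5$, contredisant l'exceptionnalité de $G_p$.

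La principale difficulté réside précisément dans ce dernier point~: rendre uniforme, sur tous les types de réduction, la minoration de l'ordre projectif~$m$, et en particulier contrôler la torsion apparaissant en réduction additive ainsi que le calcul au niveau~$2$ du cas supersingulier. C'est ce traitement uniforme qui fixe la constante~$20$~; le seul cas ordinaire fournirait déjà la borne, plus faible, $p > 5 e_p + 1$.
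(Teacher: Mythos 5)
Votre plan suit la m\^eme route que le texte, qui d\'emontre la proposition via le lemme~\ref{lem:excep}~: sous l'hypoth\`ese d'exceptionnalit\'e, tout \'el\'ement de l'image de $G_p$ dans $\PGL(E_p)$ est d'ordre au plus~$5$, et on exhibe dans l'image d'un sous-groupe d'inertie en une place $\p$ au-dessus de~$p$ d'indice de ramification $e_p$ un \'el\'ement d'ordre projectif trop grand, en distinguant selon le type de r\'eduction de $E$ en $\p$ et en utilisant la description de Serre par les caract\`eres fondamentaux. La d\'ecomposition en cas, et l'observation que c'est l'ordre projectif (et non la seule taille de $\det(G_p)$) qui importe, sont correctes.

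Il y a cependant un d\'efaut quantitatif r\'edhibitoire dans la conclusion~: de $m \ge \frac{p-1}{20 e_p}$ et de $p > 20 e_p + 1$ on ne tire que $m > 1$, et non $m > 5$. La minoration dont vous avez besoin est $m \ge \frac{p-1}{4 e_p}$, et elle est effectivement accessible~: en r\'eduction potentiellement multiplicative, $E$ devient une courbe de Tate sur une extension de $\Kp$ de degr\'e au plus~$2$, d'o\`u un \'el\'ement projectif d'ordre $\frac{p-1}{\mathrm{pgcd}(2e_p, p-1)} \ge \frac{p-1}{2e_p}$~; en r\'eduction potentiellement bonne, l'indice de ramification $\ep$ de l'extension o\`u $E$ acquiert bonne r\'eduction divise $2$, $4$ ou $6$, et une torsion quadratique (qui ne modifie pas l'image dans $\PGL(E_p)$) ram\`ene $6$ \`a $3$, de sorte que $\ep \le 4$ et que le \og facteur born\'e \fg\ perdu dans le cas additif est au plus~$4$, ce qui donne $\frac{p-1}{4e_p}$ (ou $\frac{p+1}{4e_p}$ dans le cas non d\'eploy\'e). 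C'est ce contr\^ole pr\'ecis, absent de votre r\'edaction, qui produit la constante $20 = 4 \times 5$. Enfin, dans le cas supersingulier, l'argument par le quotient des valeurs propres ne vaut que si le polygone de Newton de la multiplication par~$p$ n'a qu'une seule pente~; s'il en a deux, l'inertie contient un \'el\'ement d'ordre~$p$, ce qui contredit d\'ej\`a le fait que l'ordre d'un sous-groupe exceptionnel est premier \`a~$p$. Avec ces pr\'ecisions, votre esquisse co\"incide avec la d\'emonstration du texte.
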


La d\'emonstration de la proposition \ref{prop:excep} utilise le lemme suivant (analogue au lemme~18' (\S8.4) de~\cite{[CheEff]}).

\begin{lemme}\label{lem:excep}
L'image de $G_p$ dans $\PGL(E_p)$ contient un \'el\'ement d'ordre sup\'erieur ou \'egal \`a $\frac{p-1}{4e_p}$.
\end{lemme}

\begin{proof}
On raisonne comme dans la d\'emonstration du lemme~18'~(\S8.4) de~\cite{[CheEff]}.

Soit $\p$ une place de $K$ au-dessus de~$p$ dont l'indice de ramification dans~$K/\Q$ est \'egal \`a~$e_p$. On note~$\Kp$ le compl\'et\'e de~$K$ en~$\p$ et on fixe une place de~$\aK$ au-dessus de~$\p$~; ce choix donne un plongement du groupe de Galois absolu~$G_{\Kp}$ de $\Kp$ dans $G_K$. On trouvera l'\'el\'ement d'ordre recherch\'e dans l'image par $\phip$ du sous-groupe d'inertie de~$G_{\Kp}$~;
on raisonne pour cela selon le type de r\'eduction semi-stable de $E$ en $\p$.

On suppose d'abord que l'invariant $j$ de $E$ n'est pas entier en $\p$. Alors $E$ a potentiellement r\'eduction multiplicative en $\p$ et il existe une extension $\Kp'$ de $\Kp$, de degr\'e inf\'erieur ou \'egal \`a $2$, sur laquelle $E$ devient une courbe de Tate (\cite{[Sil]} appendice C th\'eor\`eme 14.1).
 D'apr\`es le \S1.12 de \cite{[Se]}, il existe alors une droite~$W$ de~$E_p$ stable par le sous-groupe d'inertie~$I_{\Kp'}$ de~$G_{\Kp'}$~;
  l'action de~$I_{\Kp'}$  sur~$W$ est donn\'ee par le caract\`ere fondamental de niveau~$1$ du groupe d'inertie mod\'er\'ee de~$\Kp'$ \'elev\'e \`a une puissance \'egale au degr\'e de ramification de~$\Kp'$ sur~$\Qp$ (celui-ci divisant $2e_p$)~;   l'action de~$I_{\Kp'}$  sur le quotient de~$E_p$ par~$W$ est triviale.
   Le caract\`ere fondamental de niveau~$1$ de l'inertie mod\'er\'ee de~$\Kp'$ \'etant surjectif dans $\Fpx$, on obtient que $G_p$ contient la puissance $2e_p$-i\`eme d'un demi-sous-groupe de Cartan d\'eploy\'e (voir partie \ref{sssec:descr_Cdep})~;
   l'image dans~$\PGL(E_p)$ d'un tel sous-groupe contient un \'el\'ement d'ordre~$\frac{p-1}{\mathrm{pgcd}(2e_p , p-1)}$, qui est sup\'erieur ou \'egal \`a~$\frac{p-1}{4e_p}$.

On suppose ensuite que l'invariant $j$ de $E$ est entier en $\p$.
Alors il existe une extension~$\Kp'$ de~$\Kp$ sur laquelle~$E$ a bonne r\'eduction.
On peut choisir~$\Kp'$ d'indice de ramification sur~$\Kp$ divisant l'ordre du groupe des automorphismes de la courbe elliptique  obtenue par r\'eduction au corps r\'esiduel de~$\Kp'$~(\cite{[SeTa]} \S 2, d\'emonstration du th\'eor\`eme~2)~;
 comme~$p$ est suppos\'e sup\'erieur ou \'egal \`a~$5$,  ce groupe est cyclique d'ordre~$2$,~$4$~ou~$6$.
  Quitte \`a tordre la courbe~$E$ par un caract\`ere quadratique (ce qui ne change pas l'image de $G_p$ dans $\PGL(E_p)$), on peut se ramener d'un indice de ramification $6$ \`a $3$~; ainsi, on peut supposer que l'indice de ramification de $\Kp'$ sur $\Kp$ est inf\'erieur ou \'egal \`a $4$~; on le note~$\ep$. On raisonne ensuite selon les propri\'et\'es du groupe formel associ\'e \`a $E$ sur $\Kp'$.

Si ce groupe formel est de hauteur~$1$ (r\'eduction ordinaire), alors, comme dans le cas de r\'eduction multiplicative (\cite{[Se]} proposition 11, \S1.11; \cite{[Eck05]} lemme 22, \S 2.1.1), l'image par $\phip$ du sous-groupe d'inertie de~$G_{\Kp'}$ contient la puissance $\ep e_p$-i\`eme d'un demi-sous-groupe de Cartan d\'eploy\'e~; son image dans~$\PGL(E_p)$ contient un \'el\'ement d'ordre $\frac{p-1}{\mathrm{pgcd}(\ep e_p , p-1)}$.
 
Si le groupe formel est de hauteur $2$ (r\'eduction supersinguli\`ere) et que le polygone de Newton de la multiplication par~$p$ a deux pentes distinctes, alors le sous-groupe d'inertie de~$G_{\Kp'}$ contient un \'el\'ement d'ordre $p$~(voir \cite{[CheEff]} \S 8.4 Lemme 18' b2 et le lemme 25 (\S 2.1.1) de~\cite{[Eck05]})~; il en est donc de m\^eme pour son image dans~$\PGL(E_p)$.

Enfin, si le groupe formel est de hauteur $2$ et que le polygone de Newton de la multiplication par $p$ a une seule pente, alors l'image par~$\phip$ du sous-groupe d'inertie de~$G_{\Kp'}$ contient la puissance~$\ep e_p$-i\`eme d'un sous-groupe de Cartan non d\'eploy\'e (voir \cite{[CheEff]} \S 8.4 Lemme 18' b3, \S 1.10 de \cite{[Se]} et le lemme 23 (\S 2.1.1) de~\cite{[Eck05]})~; son image dans~$\PGL(E_p)$ contient alors un \'el\'ement d'ordre~$\frac{p + 1}{\mathrm{pgcd}(\ep e_p , p + 1)}$.

Dans ces trois cas, on constate que l'image de $G_p$ dans $\PGL(E_p)$ contient un \'el\'ement d'ordre sup\'erieur ou \'egal \`a $\frac{p-1}{4e_p}$.
\end{proof}

\begin{proof}(de la proposition)

On remarque que les groupes $\mathfrak{A}_4$, $\mathfrak{A}_5$ et $\mathfrak{S}_4$ sont form\'es d'\'el\'ements d'ordre inf\'erieur ou \'egal \`a $5$. Or, pour tout $p$ strictement sup\'erieur \`a $20e_p + 1$, on a $\frac{p-1}{4e_p}$ strictement sup\'erieur \`a $5$. Le lemme \ref{lem:excep} permet donc de conclure.

\end{proof}

\begin{corollaire}\label{cor:excep}
On suppose que $p$ est strictement sup\'erieur \`a $20d + 1$. Alors~$G_p$ n'est pas un sous-groupe exceptionnel de~$\GL(E_p)$.
\end{corollaire}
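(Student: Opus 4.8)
Le plan est de d\'eduire ce corollaire directement de la proposition \ref{prop:excep}, en n'utilisant que la majoration \'el\'ementaire de l'indice de ramification rappel\'ee dans les notations \ref{not:introIrr}.

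D'abord, je rappellerais que, par d\'efinition, $e_p$ d\'esigne le plus petit indice de ramification dans l'extension $K/\Q$ d'une place de $K$ au-dessus de $p$, et que cette quantit\'e est major\'ee par le degr\'e $d$ de $K$ sur $\Q$, soit $e_p \leq d$. Cette in\'egalit\'e vaut pour tout nombre premier $p$, sans aucune restriction : elle r\'esulte de la formule fondamentale liant indices de ramification et degr\'es r\'esiduels des places au-dessus de $p$, la somme de leurs produits valant $d$, ce qui force chaque terme $e_p$ \`a \^etre au plus \'egal \`a $d$.

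On en tire aussit\^ot $20 e_p + 1 \leq 20 d + 1$. Ainsi, lorsque $p$ est strictement sup\'erieur \`a $20d + 1$, il est a fortiori strictement sup\'erieur \`a $20 e_p + 1$, ce qui est exactement l'hypoth\`ese de la proposition \ref{prop:excep} ; celle-ci assure alors que $G_p$ n'est pas un sous-groupe exceptionnel de $\GL(E_p)$. Il n'y a ici aucune difficult\'e r\'eelle, et donc pas d'obstacle principal \`a signaler : le corollaire n'est que la sp\'ecialisation de la proposition \ref{prop:excep}, obtenue en rempla\c{c}ant la borne fine mais d\'ependant de $p$, \`a savoir $20 e_p + 1$, par la borne uniforme $20 d + 1$ ; l'unique point \`a v\'erifier, l'in\'egalit\'e $e_p \leq d$, est imm\'ediat.
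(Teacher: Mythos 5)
Votre d\'emonstration est correcte et suit exactement la voie (implicite) du texte~: le corollaire se d\'eduit imm\'ediatement de la proposition~\ref{prop:excep} via la majoration $e_p \leq d$ rappel\'ee dans les notations~\ref{not:introIrr}. Rien \`a redire.
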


\begin{remarque}\label{rem:excepnonram}
S'il existe une place de $K$ au-dessus de $p$ non ramifi\'ee dans $K/\Q$ (c'est-\`a-dire, que l'indice $e_p$ vaut $1$),
 la discussion plus pr\'ecise du lemme~18 (\S8.4) de \cite{[CheEff]}, impliquant la connaissance des plongements de $\mathfrak{A}_4$, $\mathfrak{A}_5$ et $\mathfrak{S}_4$ dans $\PGL_2(\Fp)$, montre que l'hypoth\`ese du corollaire \ref{cor:excep} peut \^etre remplac\'ee par \og $p$ sup\'erieur ou \'egal \`a $17$ \fg.
\end{remarque}

\subsection{Normalisateur d'un sous-groupe de Cartan}\label{subsec:NCartan}

Dans cette partie, on suppose que l'image~$G_p$ de la repr\'esentation~$\phip$ est contenue dans le normalisateur d'un sous-groupe de Cartan, d\'eploy\'e ou non. Le groupe~$G_p$ est alors d'ordre premier \`a~$p$ et la repr\'esentation~$\phip$ est irr\'eductible si et seulement si elle n'est pas diagonalisable, c'est-\`a-dire que~$G_p$ n'est contenu dans aucun sous-groupe de Cartan d\'eploy\'e de~$\GL(E_p)$.

On fera en fait ici une hypoth\`ese plus faible, en supposant que~$G_p$ est contenu soit dans le normalisateur d'un sous-groupe de Cartan non d\'eploy\'e,  soit dans le normalisateur d'un sous-groupe de Cartan d\'eploy\'e mais pas dans le sous-groupe de Cartan d\'eploy\'e lui-m\^eme.
Ce cadre couvre tous les cas irr\'eductibles, mais aussi certaines r\'eductibles, de la repr\'esentation~$\phip$~; il suffit n\'eanmoins pour obtenir que~$G_p$ contient le sous-groupe des homoth\'eties form\'e du carr\'e de $\det(G_p)$.

\begin{proposition}\label{prop:HomCdep}
On suppose qu'il existe un sous-groupe de Cartan d\'eploy\'e de~$\GL(E_p)$ ne contenant pas~$G_p$, mais dont le normalisateur contient~$G_p$~; alors $G_p$ contient le sous-groupe des homoth\'eties d'ordre~$\frac{\delta_p}{\mathrm{pgcd}(2 , \delta_p)}$  (qui est sup\'erieur ou \'egal \`a~$\frac{p-1}{2d}$).
\end{proposition}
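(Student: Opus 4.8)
\emph{Plan de démonstration.}

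Le plan est d'exploiter la description matricielle du normalisateur d'un sous-groupe de Cartan déployé donnée en~\ref{sssec:descr_Cdep}. Je commencerais par fixer une base de~$E_p$ dans laquelle $C_d$ est formé des matrices diagonales et $N(C_d)\setminus C_d$ des matrices anti-diagonales. Par hypothèse $G_p \subseteq N(C_d)$ et $G_p \not\subseteq C_d$~; je pose $H = G_p \cap C_d$ et $W = G_p \setminus H$. Comme $C_d$ est d'indice~$2$ dans $N(C_d)$ et que $G_p$ n'est pas contenu dans~$C_d$, le sous-groupe~$H$ est d'indice~$2$ dans~$G_p$ et $W$, l'ensemble des éléments anti-diagonaux de~$G_p$, est non vide.

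L'étape cruciale est l'observation suivante~: le carré d'un élément anti-diagonal est une homothétie. Précisément, si $m = \begin{pmatrix} 0 & b \\ c & 0 \end{pmatrix}$, alors $m^2 = bc\, I = -\det(m)\, I$. Je fixe donc $g_0 \in W$~; alors $g_0^2 = -\det(g_0)\, I$ appartient à~$G_p$, et pour tout $h \in H$ le produit $g_0 h$ reste anti-diagonal et dans~$G_p$, d'où $(g_0 h)^2 = -\det(g_0)\det(h)\, I \in G_p$. En notant~$Z$ le sous-groupe de~$\Fpx$ des scalaires~$\lambda$ tels que $\lambda I \in G_p$, ces deux relations donnent $-\det(g_0) \in Z$ et $-\det(g_0)\det(h) \in Z$~; comme~$Z$ est un groupe, on en déduit $\det(h) \in Z$ pour tout $h \in H$, c'est-à-dire $\det(H) \subseteq Z$.

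Il ne reste alors qu'à minorer $|\det(H)|$. Le déterminant envoie~$G_p$ sur un sous-groupe~$\Delta$ de~$\Fpx$ d'ordre~$\delta_p$. Le morphisme composé $G_p \to \Delta \to \Delta/\det(H)$ est trivial sur~$H$, donc se factorise par $G_p/H \simeq \Z/2\Z$~; ainsi $\det(H)$ est d'indice~$1$ ou~$2$ dans~$\Delta$, et $|\det(H)|$ vaut~$\delta_p$ ou~$\delta_p/2$ (ce dernier cas exigeant $\delta_p$ pair). Dans les deux cas l'entier $\frac{\delta_p}{\mathrm{pgcd}(2,\delta_p)}$ divise $|\det(H)|$. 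Le groupe~$\Fpx$ étant cyclique, le sous-groupe $\det(H) \subseteq Z$ contient son unique sous-groupe d'ordre $\frac{\delta_p}{\mathrm{pgcd}(2,\delta_p)}$~; le sous-groupe des homothéties correspondant est donc bien contenu dans~$G_p$. La minoration annoncée en découle grâce à $\delta_p \geq \frac{p-1}{d}$ (voir~\ref{not:introIrr}), qui donne $\frac{\delta_p}{\mathrm{pgcd}(2,\delta_p)} \geq \frac{\delta_p}{2} \geq \frac{p-1}{2d}$.

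La principale difficulté ne réside pas dans le calcul $m^2 = -\det(m)\,I$, immédiat, mais dans le contrôle soigneux de l'indice de $\det(H)$ dans~$\Delta$ et de sa compatibilité avec le facteur $\mathrm{pgcd}(2,\delta_p)$ --- en particulier le fait que le cas d'indice~$2$ ne peut se produire que lorsque~$\delta_p$ est pair, ce qui garantit la divisibilité voulue.
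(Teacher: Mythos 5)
Your proof is correct. It rests on the same two structural facts as the paper's argument, namely the matrix description of $N(C_d)$ recalled en~\ref{sssec:descr_Cdep} and the identity $m^2=-\det(m)\,I$ for an anti-diagonal $m$, but you organize the deduction differently. The paper fixes a single element $g\in G_p$ whose determinant has exact order $\delta_p$ and distinguishes two cases~: if $g\notin C_d$ it squares $g$ to get a homothety of ratio $-\det(g)$~; if $g\in C_d$ it takes some $h\in G_p\setminus C_d$ and forms $hgh^{-1}g$, a homothety of ratio $\det(g)$. You instead prove the global inclusion $\det(G_p\cap C_d)\subseteq Z$ by comparing $(g_0h)^2$ with $g_0^2$ --- note that $(g_0h)^2g_0^{-2}=(g_0hg_0^{-1})h$ since $g_0^2$ is central, so your computation is the paper's conjugation trick in disguise --- and you then need the extra step that $\det(G_p\cap C_d)$ has index at most $2$ in $\det(G_p)$, the index-$2$ case forcing $\delta_p$ to be even~; that step is carried out correctly and the divisibility bookkeeping is sound. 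What the paper's organization buys in exchange for the case analysis is a slightly sharper output~: it exhibits a homothety of ratio $\gamma_p$ or $-\gamma_p$ with $\gamma_p$ of exact order $\delta_p$, which is exactly what the remarque~\ref{rem:Cartan_p=1mod4} uses to conclude that $G_p$ contains all homotheties when $\delta_p=p-1$ and $p\equiv 1\pmod 4$~; your version proves the proposition as stated but would need a small supplement (e.g.\ tracking $-\det(g_0)\in Z$ and the order of $\det(g_0)$) to recover that refinement.
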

\begin{proof}
Soient~$C_d$ un sous-groupe de Cartan d\'eploy\'e v\'erifiant les hypoth\`eses de la proposition et~$N(C_d)$ son normalisateur (voir la partie  \ref{sssec:descr_Cdep} pour la description de ces sous-groupes).
On se place dans une base de~$E_p$ o\`u~$C_d$ est form\'e des matrices diagonales et~$N(C_d)$ des matrices anti-diagonales.  
 Soit $g$ un \'el\'ement de~$G_p$ dont le d\'eterminant est d'ordre~$\delta_p$ dans~$\Fpx$.

On suppose d'abord que $g$ est dans le sous-groupe de Cartan~$C_d$. Par hypoth\`ese, il existe un \'el\'ement~$h$ de~$G_p$ qui appartient au compl\'ementaire de~$C_d$ dans~$N(C_d)$. La conjugaison par $h$ \'echange alors les deux termes diagonaux de $g$~; on en d\'eduit que l'\'el\'ement~$hgh^{-1}g$ de~$G_p$ est une homoth\'etie de rapport~$\det(g)$.

On suppose ensuite que $g$ est dans~$N(C_d) \backslash C_d$. On observe alors que le carr\'e de~$g$ est une homoth\'etie de rapport $-\det(g)$, contenue dans $G_p$.

On constate que dans les deux cas, $G_p$ contient une homoth\'etie de rapport $(\det(g))^2$, d'o\`u le r\'esultat.
\end{proof}

\begin{proposition}\label{prop:HomCnondep}
On suppose que $G_p$ est inclus dans le normalisateur d'un sous-groupe de Cartan non d\'eploy\'e~;
alors $G_p$ contient le sous-groupe des homoth\'eties d'ordre~$\frac{\delta_p}{\mathrm{pgcd}(2 , \delta_p)}$  (qui est sup\'erieur ou \'egal \`a~$\frac{p-1}{2d}$).

\end{proposition}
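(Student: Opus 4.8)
The plan is to mimic the proof of Proposition~\ref{prop:HomCdep}, replacing the split Cartan by the non-split one and exploiting the identification of $C_{nd}$ with $\mathbb{F}_{p^2}^{\times}$ recalled in \S\ref{sssec:descr_Cnondep}. I fix a non-split Cartan subgroup $C_{nd}$ whose normalizer $N(C_{nd})$ contains $G_p$, and I work in a base in which $C_{nd}$ and $N(C_{nd})\setminus C_{nd}$ have the matrix forms given there. Since $\det(G_p)$ is the cyclic subgroup of $\Fpx$ of order $\delta_p$, I first choose an element $g \in G_p$ whose determinant has order $\delta_p$, and I then distinguish two cases according to whether $g$ lies in $C_{nd}$ or in $N(C_{nd})\setminus C_{nd}$.

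If $g \in C_{nd}$, I use the isomorphism $C_{nd}\simeq\mathbb{F}_{p^2}^{\times}$, under which $\det$ corresponds to the norm $N_{\mathbb{F}_{p^2}/\Fp}\colon x \mapsto x^{p+1}$. As $x^{p+1}$ is fixed by the Frobenius $x\mapsto x^{p}$, it lies in $\Fp$, so $g^{p+1}$ is a scalar matrix; equivalently, writing $g=\begin{pmatrix} a & b\alpha \\ b & a \end{pmatrix}$, a direct computation gives $g^{p+1}=(a^{2}-b^{2}\alpha)\,\mathrm{Id}=\det(g)\,\mathrm{Id}$. Hence $G_p$ contains the homothety of ratio $\det(g)$. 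Note that this uses $g$ alone, with no auxiliary element outside the Cartan, which is exactly why the hypothesis here is weaker than in Proposition~\ref{prop:HomCdep}. If instead $g \in N(C_{nd})\setminus C_{nd}$, writing $g=\begin{pmatrix} a & -b\alpha \\ b & -a \end{pmatrix}$ and squaring gives $g^{2}=(a^{2}-b^{2}\alpha)\,\mathrm{Id}=-\det(g)\,\mathrm{Id}$, so $G_p$ contains the homothety of ratio $-\det(g)$.

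In both cases, squaring the homothety just produced shows that $G_p$ contains the homothety of ratio $(\det g)^{2}$. Since $\det(g)$ has order $\delta_p$ in $\Fpx$, this element has order $\frac{\delta_p}{\mathrm{pgcd}(2,\delta_p)}$, so $G_p$ contains the cyclic subgroup of homotheties of that order. The lower bound then follows from Notations~\ref{not:introIrr}: one has $\delta_p \geq \frac{p-1}{d}$, whence $\frac{\delta_p}{\mathrm{pgcd}(2,\delta_p)} \geq \frac{\delta_p}{2} \geq \frac{p-1}{2d}$.

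I expect no serious obstacle. The only computation requiring care is the identity $g^{p+1}=\det(g)\,\mathrm{Id}$ in the Cartan case, which is simply the arithmetic translation of the fact that the norm $\mathbb{F}_{p^2}\to\Fp$ is the $(p+1)$-st power and takes values in the scalars $\Fp$; the remainder is the same bookkeeping as in the split case, the swapping element $h$ used there being superseded here by passage to the power $p+1$ within the cyclic group $C_{nd}$ itself.
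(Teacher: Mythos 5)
Your proof is correct and follows exactly the same route as the paper's: choose $g\in G_p$ with $\det(g)$ of order $\delta_p$, show $g^{p+1}=\det(g)\,\mathrm{Id}$ when $g\in C_{nd}$ and $g^{2}=-\det(g)\,\mathrm{Id}$ when $g\in N(C_{nd})\setminus C_{nd}$, and conclude via the homothety of ratio $(\det g)^{2}$. The only difference is that you spell out the matrix computations that the paper leaves as \og on v\'erifie \fg.
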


\begin{proof}
Soient~$C_{nd}$ un sous-groupe de Cartan non d\'eploy\'e dont le normalisateur contient~$G_p$ et~$N(C_{nd})$ ce normalisateur (voir la partie  \ref{sssec:descr_Cnondep} pour la description de ces sous-groupes).
Soit~$g$ un \'el\'ement de $G_p$ dont le d\'eterminant est d'ordre~$\delta_p$.

On suppose d'abord que~$g$ est dans le sous-groupe de Cartan $C_{nd}$~; on v\'erifie alors que l'\'el\'ement~$g^{p+1}$ de~$G_p$ est une homoth\'etie de rapport~$\det(g)$.

On suppose ensuite que $g$ est dans le compl\'ementaire de $C_{nd}$ dans~$N(C_{nd})$~; on v\'erifie alors que le carr\'e de $g$ est une homoth\'etie de rapport $-\det(g)$, contenue dans~$G_p$.

On constate que dans les deux cas, $G_p$ contient une homoth\'etie de rapport~$(\det(g))^2$, d'o\`u le r\'esultat.
\end{proof}

\begin{corollaire}\label{cor:HomdetsurjC}
On suppose que le d\'eterminant de $\phip$ est surjectif dans~$\Fpx$ et que~$G_p$ est contenu soit dans le normalisateur d'un sous-groupe de Cartan non d\'eploy\'e,  soit dans le normalisateur d'un sous-groupe de Cartan d\'eploy\'e mais pas dans le sous-groupe de Cartan d\'eploy\'e lui-m\^eme~; alors~$G_p$ contient tous les carr\'es  des homoth\'eties.
\end{corollaire}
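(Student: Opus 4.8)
Le plan est de d\'eduire ce corollaire directement des propositions~\ref{prop:HomCdep} et~\ref{prop:HomCnondep}, l'hypoth\`ese de surjectivit\'e du d\'eterminant servant uniquement \`a identifier pr\'ecis\'ement le sous-groupe d'homoth\'eties qu'elles fournissent.

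D'abord, je remarquerais que l'hypoth\`ese du corollaire place~$G_p$ dans l'une exactement des deux situations d\'ej\`a trait\'ees~: ou bien~$G_p$ est contenu dans le normalisateur d'un sous-groupe de Cartan non d\'eploy\'e, et l'on applique la proposition~\ref{prop:HomCnondep}~; ou bien~$G_p$ est contenu dans le normalisateur d'un sous-groupe de Cartan d\'eploy\'e sans \^etre contenu dans ce Cartan lui-m\^eme, et l'on applique la proposition~\ref{prop:HomCdep}. Dans les deux cas, on obtient que~$G_p$ contient le sous-groupe des homoth\'eties d'ordre~$\frac{\delta_p}{\mathrm{pgcd}(2 , \delta_p)}$.

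Ensuite, j'exploiterais la surjectivit\'e du d\'eterminant. Par d\'efinition (notations~\ref{not:introIrr}),~$\delta_p$ est l'ordre de l'image de~$G_p$ par le d\'eterminant dans~$\Fpx$~; cette image \'etant suppos\'ee \'egale \`a~$\Fpx$ tout entier, on a~$\delta_p = p - 1$. Le nombre premier~$p$ \'etant impair, on a~$\mathrm{pgcd}(2 , p-1) = 2$, de sorte que l'ordre du sous-groupe d'homoth\'eties obtenu vaut~$\frac{p-1}{2}$.

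Enfin, je conclurais en utilisant que le groupe des homoth\'eties de~$\GL(E_p)$ est cyclique d'ordre~$p-1$ (car isomorphe \`a~$\Fpx$). Un groupe cyclique d'ordre pair poss\`ede un unique sous-groupe d'indice~$2$, \`a savoir celui form\'e de ses carr\'es~; le sous-groupe des homoth\'eties d'ordre~$\frac{p-1}{2}$ fourni par les propositions co\"incide donc avec l'ensemble des carr\'es des homoth\'eties, ce qui \'etablit le r\'esultat. Je ne pr\'evois aucune difficult\'e s\'erieuse ici~: tout le contenu substantiel a d\'ej\`a \'et\'e \'etabli dans les propositions~\ref{prop:HomCdep} et~\ref{prop:HomCnondep}, et le seul point m\'eritant v\'erification est l'identification, dans le groupe cyclique des homoth\'eties, du sous-groupe d'ordre~$\frac{p-1}{2}$ avec celui des carr\'es.
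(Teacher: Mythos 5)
Votre preuve est correcte et suit exactement l'argument que le papier laisse implicite (le corollaire y est \'enonc\'e sans d\'emonstration, comme cons\'equence imm\'ediate des propositions~\ref{prop:HomCdep} et~\ref{prop:HomCnondep})~: la surjectivit\'e du d\'eterminant donne $\delta_p = p-1$, donc un sous-groupe d'homoth\'eties d'ordre $\frac{p-1}{2}$, qui est bien l'unique sous-groupe d'indice~$2$ du groupe cyclique des homoth\'eties, c'est-\`a-dire l'ensemble de leurs carr\'es. Rien \`a redire.
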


\begin{remarque}\label{rem:Cartan_p=1mod4}
Soit~$\gamma_p$ un \'el\'ement de~$\Fpx$ d'ordre~$\delta_p$.
D'apr\`es les d\'emonstrations des propositions~\ref{prop:HomCdep} et~\ref{prop:HomCnondep}, le groupe~$G_p$ contient une homoth\'etie de rapport~$\gamma_p$ ou une homoth\'etie de rapport~$-\gamma_p$. Or, l'\'el\'ement~$- \gamma_p$ de~$\Fpx$ est d'ordre~:
\begin{itemize}
\item[$\bullet$] $2\delta_p$ si~$\delta_p$ est impair~;
\item[$\bullet$] $\frac{\delta_p}{2}$ si~$\delta_p$ est divisible par~$2$ mais pas par~$4$~;
\item[$\bullet$] $\delta_p$ si~$\delta_p$ est divisible par $4$.
\end{itemize}
On en d\'eduit que, dans le cadre du corollaire (o\`u~$\delta_p$ vaut~$p-1$), si~$p$ est congru \`a~$1$ modulo~$4$, alors $G_p$ contient toutes les homoth\'eties.
\end{remarque}

\subsection{R\'esum\'e dans le cas irr\'eductible et th\'eor\`eme sur $\Q$}\label{ssec:ResuIrr}

On peut rassembler les r\'esultats sur les cas irr\'eductibles de la repr\'esentation $\phip$ en divers \'enonc\'es. J'en ai retenu ici deux versions~: dans la premi\`ere, la borne sur~$p$ ne d\'epend que du degr\'e du corps~$K$, mais l'indice du sous-groupe des homoth\'eties contenu dans~$G_p$ d\'epend aussi de~$K$ (toujours par son degr\'e)~; dans la seconde, la borne sur~$p$ d\'epend du discriminant de~$K$, mais on obtient que~$G_p$ contient tous les carr\'es des homoth\'eties.

\begin{TheoHommodpA_Irr1}
On suppose la repr\'esentation~$\phip$ irr\'eductible. Si~$p$ est strictement sup\'erieur \`a~$20 d + 1$, alors~$G_p$ contient un sous-groupe des homoth\'eties d'ordre sup\'erieur ou \'egal \`a $\frac{p-1}{2d}$.
\end{TheoHommodpA_Irr1}

\begin{proof}
Par le corollaire \ref{cor:excep}, l'hypoth\`ese faite sur $p$ garantit que $G_p$ n'est pas un sous-groupe exceptionnel. Les propositions~\ref{prop:SL},~\ref{prop:HomCdep} et~\ref{prop:HomCnondep} garantissent alors que~$G_p$ contient le sous-groupe des homoth\'eties d'ordre~$\frac{\delta_p}{\mathrm{pgcd}(2, \delta_p)}$~; cet ordre est sup\'erieur ou \'egal \`a $\frac{p-1}{2d}$.
\end{proof}

\begin{TheoHommodpA_Irr2}
On suppose la repr\'esentation~$\phip$ irr\'eductible. Si~$p$ est non ramifi\'e dans~$K$ et sup\'erieur ou \'egal \`a~$17$, alors~$G_p$ contient les carr\'es des homoth\'eties. Si de plus~$p$ est congru \`a~$1$ modulo~$4$, alors~$G_p$ contient toutes les homoth\'eties.
\end{TheoHommodpA_Irr2}

\begin{proof}
Comme $p$ est non ramifi\'e dans $K$, le d\'eterminant de $\phip$ est surjectif dans $\Fpx$ (voir notation \ref{not:introIrr}), donc $\delta_p$ vaut $p-1$.

Si l'ordre de $G_p$ est divisible par $p$, alors $G_p$ contient $\SL(E_p)$ et $\phip$ est surjective.
La remarque \ref{rem:excepnonram} assure ensuite que, $p$ \'etant sup\'erieur ou \'egal \`a $17$, $G_p$ n'est pas un sous-groupe exceptionnel.
Enfin, le corollaire \ref{cor:HomdetsurjC} implique que $G_p$ contient les carr\'es des homoth\'eties dans les cas irr\'eductibles restants.
La derni\`ere assertion r\'esulte de la remarque \ref{rem:Cartan_p=1mod4}.
\end{proof}

Dans \cite{[Maz]} (th\'eor\`eme 1 de l'introduction), Mazur a montr\'e que, lorsque le corps de base est celui des rationnels~$\Q$ et que~$p$ n'est pas dans l'ensemble $\{2, 3, 5, 7, 13, 11, 17, 19, 37, 43, 67, 163 \}$, alors la repr\'esentation~$\phip$ est irr\'eductible.
Les cas trait\'es pr\'ec\'edemment suffisent donc pour \'enoncer le r\'esultat sur les homoth\'eties contenues dans l'image de $\phip$ pour le corps des rationnels, comme annonc\'e dans l'introduction.

\begin{TheoHommodpAQ}
On suppose que le corps $K$ est \'egal \`a $\Q$. Si $p$ est sup\'erieur ou \'egal \`a $23$ et diff\'erent de $37$, $43$, $67$ et $163$, alors~$G_p$ contient tous les carr\'es des homoth\'eties.
Si de plus $p$ est congru \`a $1$ modulo $4$, alors $G_p$ contient toutes les homoth\'eties.
\end{TheoHommodpAQ}

\section{Cas d'une repr\'esentation r\'eductible}\label{sec:red}

On suppose dans tout cette partie que la repr\'esentation $\phip$ est r\'eductible, c'est-\`a-dire que son image est contenue dans un sous-groupe de Borel de~$\GL(E_p)$.

La courbe~$E$ poss\`ede alors un sous-groupe d'ordre~$p$ globalement stable par l'action du groupe de Galois~$G_K$.
On fixe un tel sous-groupe~$W$~; il lui est associ\'e une isog\'enie de~$E$ de degr\'e~$p$, d\'efinie sur~$K$.
L'action de~$G_K$ sur~$W(\aK)$ est donn\'ee par un caract\`ere continu de~$G_K$ dans $\Fpx$~; on le note $\la$ et, suivant la terminologie introduite dans  \cite{[Maz]}, on l'appelle le caract\`ere d'isog\'enie associ\'e au couple~$(E,W)$.
On note~$\kip$ le caract\`ere cyclotomique, qui est aussi le d\'eterminant de~$\phip$ (\cite{[Se]}, page~273).
On fixe \'egalement une base de $E_p$ dont le premier vecteur engendre~$W(\aK)$~; dans cette base la matrice de la repr\'esentation~$\phip$ est triangulaire sup\'erieure, de termes diagonaux~$(\la, \kip\la^{-1})$.

Le th\'eor\`eme de l'introduction  pour une repr\'esentation~$\phip$ r\'eductible est d\'emontr\'e dans la partie~\ref{ssec:thmAred}. Le raisonnement employ\'e suit l'\'etude du caract\`ere d'isog\'enie initi\'ee par Mazur dans~\cite{[Maz]} (\S 5 et 6) et poursuivie par Momose dans~\cite{[Mom]} (voir aussi~\cite{[Krau2]},~\cite{[Krau]} et~\cite{[Bil]} pour des discussions similaires).
Les parties~\ref{ssec:Inerenp} et~\ref{subsec:horsp} d\'ecrivent les propri\'et\'es locales de~$\la$, \`a savoir ses restrictions \`a des sous-groupes d'inertie ou de d\'ecomposition pour les places finies de~$K$.
Dans la partie~\ref{ssec:thmAred} la th\'eorie du corps de classes globale, appliqu\'ee au caract\`ere~$\lad$, permet d'expliciter les situations pr\'esentant une obstruction \`a la pr\'esence d'un \og gros \fg\ sous-groupe des homoth\'eties dans l'image de~$\phip$. Pour lever une de ces obstructions, on utilisera notamment les bornes uniformes pour l'ordre des points de torsion des courbes elliptiques en fonction du degr\'e du corps de base (\cite{[Pa]}, \cite{[Mer96]}).

Les r\'esultats des parties~\ref{ssec:Inerenp} et~\ref{subsec:horsp} sont d\'ej\`a mentionn\'es et utilis\'es dans~\cite{[Mom]} (\S 2).
Pour la commodit\'e du lecteur, on en a rappel\'e ici des \'enonc\'es adapt\'es \`a leur utilisation dans la partie~\ref{ssec:thmAred} et quelques \'el\'ements de d\'emonstration (voir aussi la partie~1 de~\cite{[Dav08]}).

\begin{hypotheses}\ 

\begin{enumerate}
\item Lorsque le corps de base est celui des rationnels, Mazur a montr\'e (\cite{[Maz]}) que, si~$p$ n'est pas dans l'ensemble $\{2, 3, 5, 7, 13, 11, 17, 19, 37, 43, 67, 163 \}$, alors la repr\'esentation~$\phip$ est irr\'eductible. Dans toute cette partie, on supposera donc que le corps~$K$ est diff\'erent de~$\Q$.
\item Dans toute cette partie, on suppose \'egalement que le nombre premier~$p$ est non ramifi\'e dans l'extension~$K/\Q$.
\end{enumerate}
\end{hypotheses}

\subsection{Action des sous-groupes d'inertie des places au-dessus de $p$}\label{ssec:Inerenp}

Soit~$\p$ un id\'eal premier de~$K$ situ\'e au-dessus de~$p$.
On fixe~$D_{\p}$ un sous-groupe de d\'ecomposition pour~$\p$ dans~$G_K$ et on note~$\Ip$ son sous-groupe d'inertie.

La proposition suivante g\'en\'eralise la proposition 5.1 (\S 5) de \cite{[Maz]}~; elle figure d\'ej\`a, sous forme de remarque (remarque 1 de la partie 2), dans \cite{[Mom]}.

\begin{proposition}\label{coeffap}
Il existe un entier $\ap$ valant $0$, $4$, $6$, $8$ ou $12$ tel que la restriction au sous-groupe d'inertie~$\Ip$ de la puissance douzi\`eme du caract\`ere d'isog\'enie $\la$ soit le caract\`ere cyclotomique \'elev\'e \`a la puissance $\ap$.
\end{proposition}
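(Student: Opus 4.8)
Le plan est de ramener l'énoncé à un calcul sur l'inertie modérée, puis de distinguer selon le type de réduction de~$E$ en~$\p$.

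D'abord, comme~$\la$ prend ses valeurs dans~$\Fpx$, d'ordre premier à~$p$, sa restriction à~$\Ip$ se factorise par l'inertie modérée ; et comme~$p$ est non ramifié dans~$K$, le complété~$\Kp$ est non ramifié sur~$\Qp$, de sorte que la restriction à~$\Ip$ du caractère cyclotomique~$\kip$ est le caractère fondamental de niveau~$1$, surjectif sur~$\Fpx$. On peut donc écrire~$\la|_{\Ip} = \kip^{c}|_{\Ip}$ pour un entier~$c$ modulo~$p-1$, et l'énoncé équivaut à~$12\,c \equiv \ap \pmod{p-1}$ avec~$\ap \in \{0,4,6,8,12\}$.

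La courbe~$E$ acquiert ensuite une réduction semi-stable sur une extension~$\Kp'$ de~$\Kp$ dont l'indice de ramification~$\ep$ appartient à~$\{1,2,3,4,6\}$ (pour~$p \ge 5$) et divise donc~$12$, qui est le ppcm de ces valeurs~: c'est là qu'intervient la puissance douzième. Comme~$\Kp/\Qp$ est non ramifiée, on a~$\ep = e(\Kp'/\Qp)$, et un calcul standard sur les caractères modérés donne que la restriction à l'inertie de~$\Kp'$ du caractère fondamental de~$\Kp$ est sa puissance~$\ep$-ième. Sur~$\Kp'$, je déterminerais le poids~$r$ de la droite stable~$W$ (c'est-à-dire l'entier tel que~$\la|_{I_{\Kp'}} = \theta^{r}$, où~$\theta$ est le caractère fondamental de niveau~$1$ de~$\Kp'$) selon le type de réduction, comme dans la démonstration du lemme~\ref{lem:excep} et en suivant le \S1.11 et le \S1.12 de~\cite{[Se]}~: réduction potentiellement multiplicative (courbe de Tate) ou bonne réduction ordinaire donnent~$r \in \{0, \ep\}$ (droite multiplicative ou connexe, resp. étale)~; la bonne réduction supersingulière à une seule pente fournit une action irréductible de l'inertie sur~$E_p$, donc aucune droite stable, situation exclue dans le cas réductible~; reste le cas supersingulier à deux pentes, où le sous-groupe canonique fournit une droite stable de poids intermédiaire.

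On conclut en reliant les poids~: de~$\la|_{I_{\Kp'}} = \theta^{\ep c} = \theta^{r}$ on tire~$\ep c \equiv r \pmod{p-1}$, et comme~$\ep$ divise~$12$, en multipliant par l'entier~$12/\ep$ on obtient~$12\,c \equiv (12/\ep)\,r \pmod{p-1}$~; on pose alors~$\ap = (12/\ep)\,r \bmod (p-1)$. Les cas ordinaire et multiplicatif donnent aussitôt~$\ap \in \{0, 12\}$. L'obstacle principal est le traitement des cas supersinguliers~: il faut vérifier que les poids intermédiaires issus du sous-groupe canonique tombent exactement dans~$\{4,6,8\}$, à l'exclusion de~$\{2,3,9,10\}$. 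Pour cela, on observe que le caractère~$\lad$ est insensible à une torsion de~$E$ par un caractère quadratique (celle-ci multipliant~$\la$ par un caractère d'ordre~$2$), ce qui permet de normaliser l'indice~$\ep$~; puis la structure du groupe des automorphismes de la courbe réduite — qui force, lorsque l'invariant~$\overline{\jmath}$ vaut~$0$ ou~$1728$, une pente unique dans les cas supersinguliers inertes — écarte les valeurs interdites. C'est cette analyse fine de la réduction supersingulière qui constitue le c\oe ur de la démonstration.
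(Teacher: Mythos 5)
Votre squelette de d\'emonstration est celui de l'article~: factorisation de $\la_{|\Ip}$ par l'inertie mod\'er\'ee et \'ecriture $\la_{|\Ip} = \kip^{c}$ (possible car $\Kp/\Qp$ est non ramifi\'ee), passage \`a une extension $\Kp'$ d'indice de ramification $\ep$ divisant $12$ sur laquelle $E$ devient semi-stable, puis la congruence $\ep c \equiv r \pmod{p-1}$, d'o\`u $12c \equiv (12/\ep)\, r \pmod{p-1}$~; les cas multiplicatif et ordinaire sont correctement r\'egl\'es. En revanche, le c\oe ur de l'\'enonc\'e --- l'appartenance de $(12/\ep)\, r$ \`a $\{0,4,6,8,12\}$ --- repose sur deux points que votre texte ne fournit pas. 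D'abord, l'encadrement $0 \le r \le \ep$ du poids de la droite stable n'est pas \'etabli~: l'appel au \og sous-groupe canonique \fg\ n'en tient pas lieu. L'outil utilis\'e dans l'article est le corollaire~3.4.4 de~\cite{[Ray]} (ou le \S 1.13 de~\cite{[Se]}), valable pour tout sous-quotient d'ordre $p$ du sch\'ema en groupes $E[p]$ sur une base d'indice de ramification absolu $\ep < p-1$, sans aucune discussion des pentes ni du type de r\'eduction supersinguli\`ere.

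Ensuite et surtout, votre m\'ecanisme d'exclusion des valeurs $2$, $3$, $9$, $10$ (groupe d'automorphismes de la courbe r\'eduite, invariant $j$ \'egal \`a $0$ ou $1728$, unicit\'e de la pente dans le cas inerte) ne fonctionne pas~: rien dans cette discussion n'interdit a priori un poids $r$ impair lorsque $\ep$ vaut $4$ ou $6$. L'argument correct est \'el\'ementaire et figure d\'ej\`a implicitement dans vos notations~: le caract\`ere $\la$ \'etant d\'efini sur $\Ip$ tout entier et non seulement sur l'inertie de $\Kp'$, l'entier $c$ existe, donc la congruence $\ep c \equiv r \pmod{p-1}$ doit admettre une solution en $c$~; comme $p-1$ est pair, elle n'en admet aucune si $\ep$ est pair et $r$ impair. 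Cette obstruction de parit\'e \'elimine exactement les couples $(\ep,r)$ avec $\ep$ pair et $r$ impair, donc toutes les valeurs $\ap \in \{2,3,9,10\}$, tout en laissant $\ap = 6$ atteint par $(\ep,r)=(4,2)$ et $\ap \in \{4,8\}$ atteint pour $\ep = 3$ ou $(\ep,r) \in \{(6,2),(6,4)\}$. Sans cette observation, votre d\'emonstration ne conclut pas~; la normalisation de $\ep$ par torsion quadratique ne la remplace pas, puisqu'elle ne traite pas le cas $\ep = 4$.
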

\begin{proof}

On note~$\Kp$ le compl\'et\'e de~$K$ en~$\p$ ; on en fixe une cl\^oture alg\'ebrique~$\aKp$ et on note~$\Kp^{nr}$ son extension non ramifi\'ee maximale.
On distingue le raisonnement selon si l'invariant $j$ de $E$ est entier ou non en $\p$.

On suppose d'abord que cet invariant~$j$ n'est pas entier en~$\p$.
Alors~$E$ a potentiellement r\'eduction multiplicative en~$\p$ et il existe une extension~$\Kp'$ de~$\Kp$, de degr\'e au plus~$2$, sur laquelle~$E$ est isomorphe \`a une courbe de Tate (\cite{[Sil]} appendice C th\'eor\`eme 14.1).
On note $D'_{\p}$ le sous-groupe de $D_{\p}$ correspondant au groupe de Galois absolu de $\Kp'$ et $\Ip'$ son sous-groupe d'inertie.

D'apr\`es \cite{[Se]} (proposition 13 de \S1.12 et page 273 de \S 1.11), le caract\`ere~$\la$ restreint au sous-groupe $\Ip'$ est soit trivial soit \'egal au caract\`ere cyclotomique $\kip$.
Comme $\Ip'$ est un sous-groupe d'indice au plus $2$ de $\Ip$, on obtient que $\la_{|\Ip}^2$ est soit trivial soit \'egal \`a~$\kip^2$. Finalement ~$\la_{|\Ip}^{12}$ co\"incide avec~$\kip^{\ap}$ pour~$\ap$ valant~$0$ ou~$12$.
 
On suppose ensuite que l'invariant~$j$ de~$E$ est entier en~$\p$, c'est-\`a-dire que~$E$ a potentiellement bonne r\'eduction en~$\p$.
On note~$\theta_{p-1}$ le caract\`ere fondamental de niveau~$1$ du groupe d'inertie mod\'er\'ee~$\Ip^t$ de~$\Kp$ (voir \cite{[Se]} page 267)~;~$p$ \'etant suppos\'e non ramifi\'e dans l'extension~$K/\Q$,~$\theta_{p-1}$ induit le caract\`ere cyclotomique sur le groupe d'inertie~$\Ip$.
Le caract\`ere~$\la$ restreint \`a~$\Ip$, continu et \`a valeurs dans~$\Fpx$, se factorise par un caract\`ere de~$\Ip^t$, encore \`a valeurs dans $\Fpx$.
D'apr\`es la proposition 5~(\S1.7) de \cite{[Se]}, un tel caract\`ere est de la forme $\theta_{p-1}^{a'_{\p}}$, pour un entier~$a'_{\p}$.

Il existe  une extension $\Kp'$ de $\Kp$ sur laquelle $E$ a bonne r\'eduction~; comme dans la d\'emonstration du lemme  \ref{lem:excep}, on peut choisir~$\Kp'$ d'indice de ramification sur~$\Kp$ divisant~$2$,~$4$ ou~$6$.
On note $\ep$ cet indice~;~$p$ \'etant suppos\'e non ramifi\'e dans~$K$,~$\ep$ est \'egalement l'indice de ramification absolu de~$\Kp'$.
On note \'egalement~$D'_{\p}$ le sous-groupe de~$D_{\p}$ isomorphe au groupe de Galois absolu de~$\Kp'$,~$\Ip'$ son sous-groupe d'inertie,~$\Ip'^{t}$ son groupe d'inertie mod\'er\'ee et~$\theta'_{p-1}$ le caract\`ere fondamental de niveau~$1$ de~$\Ip'^{t}$.

Comme~$E$ a bonne r\'eduction sur~$\Kp'$, le corollaire~3.4.4 de \cite{[Ray]} (ou le \S 1.13 de \cite{[Se]}) donne qu'il existe un entier~$r_{\p}$, compris entre~$0$ et~$\ep$, tel que le caract\`ere~$\la$ restreint \`a~$\Ip'$ se factorise par le caract\`ere~$(\theta'_{p-1})^{r_{\p}}$ de~$\Ip'^{t}$.
Or le groupe d'inertie mod\'er\'ee~$\Ip'^{t}$ s'identifie \`a un sous-groupe de~$\Ip^{t}$ sur lequel le caract\`ere fondamental de niveau~$1$ de~$\Kp$ induit la puissance~$\ep$-i\`eme du caract\`ere fondamental de niveau $1$ de $\Kp'$ (\cite{[Se]} \S1.4). On obtient ainsi sur~$\Ip'^{t}$~:
$$
(\theta'_{p-1})^{r_{\p}} = (\theta_{p-1})^{a'_{\p}} = (\theta'_{p-1})^{\ep a'_{\p}}.
$$
Comme le caract\`ere~$\theta'_{p-1}$ est d'ordre~$p-1$, on obtient la congruence~: 
$$
(\star)\  \ep\ap' \equiv \rp \mod p-1.
$$
Comme~$\ep$ est un diviseur de~$4$ ou~$6$, il divise~$12$. On a donc :
$$
12\ap' \equiv \frac{12}{\ep}\ep\ap' \equiv \frac{12}{\ep}\rp \mod p-1.
$$
On a d\'eduit qu'on a sur $\Ip$~: 
$$
\la^{12}_{|\Ip} = \kip^{\frac{12}{\ep}\rp}.
$$

On est donc ramen\'e \`a d\'eterminer, pour chacune des valeurs possibles du couple $(\ep,\rp)$ (elles sont en nombre fini), la valeur de l'entier $\ap =  \frac{12}{\ep}\rp$ (modulo~$p-1$).
On remarque que pour certaines valeurs du couple~$(\ep,\rp)$, la congruence~$(\star)$ n'a pas de solution en~$\ap'$~: c'est le cas lorsque~$\ep$ est pair et~$\rp$ impair. Pour d'autres valeurs de~$(\ep,\rp)$, l'existence d'un entier~$\ap'$ satisfaisant la congruence $(\star)$ donne une condition suppl\'ementaire sur $p$~:
\begin{itemize}
\item si~$3$ divise~$\ep$ mais ne divise pas~$\rp$, alors~$3$ ne divise pas~$p-1$, donc~$p$ est congru \`a~$2$ modulo~$3$ ;
\item si~$\ep$ est \'egal \`a~$4$ mais ne divise pas~$\rp$, alors~$4$ ne divise pas~$p-1$ donc~$p$ est congru \`a~$3$ modulo~$4$.
\end{itemize}
Le tableau suivant r\'esume les valeurs possibles pour le couple~$(\ep,\rp)$, la valeur de~$\ap =  \frac{12}{\ep}\rp$ correspondante, ainsi que les informations suppl\'ementaires obtenues. Les cases marqu\'ees d'une croix correspondent aux cas o\`u la congruence~$(\star)$ n'a pas de solutions.
$$
\begin{array}{| c | c | c | c | c | c | c | c | c | c | }
\hline
\ep & \multicolumn{2}{c |}{1} & \multicolumn{3}{c |}{2}  & \multicolumn{4}{c |}{3} \\ \hline
\rp & 0 & 1 & 0 & 1 & 2 & 0 & 1 & 2 & 3 \\ \hline
\ap = \frac{12}{\ep}\rp & 0 & 12 & 0 & \times & 12 & 0 & 4 & 8 & 12 \\ \hline
p & \multicolumn{2}{c |}{-} & \multicolumn{3}{c |}{-}  & - & \multicolumn{2}{c |}{ p \equiv 2 [3]} & - \\ \hline
\end{array}
$$
$$
\begin{array}{ | c | c | c | c | c | c | c | c | c | c | c | c | c |}
\hline
\ep &  \multicolumn{5}{c |}{4} & \multicolumn{7}{c |}{6} \\ \hline
\rp &  0 & 1 & 2 & 3 & 4 & 0 & 1 & 2 & 3 & 4 & 5 & 6 \\ \hline
\ap = \frac{12}{\ep}\rp & 0 & \times & 6 & \times & 12 & 0 & \times & 4 & \times & 8 & \times & 12 \\ \hline
p & - & \times & p \equiv 3 [4] & \times & - & -  & \times & p \equiv 2 [3] &  \times & p \equiv 2 [3] & \times & - \\ \hline
\end{array}
$$
\end{proof}

\subsection{Places hors de $p$}\label{subsec:horsp}

Soit $\q$ un id\'eal premier de $K$ qui n'est pas au-dessus de $p$.
On note :
\begin{itemize}
\item[$\bullet$] $\Kq$ le compl\'et\'e de $K$ en $\q$~;
\item[$\bullet$] $\kq$ le corps r\'esiduel de $K$ en $\q$~;
\item[$\bullet$] $\Dq$ un sous-groupe de d\'ecomposition (fix\'e) pour $\q$ dans $G_K$ ;
\item[$\bullet$] $\Iq$ le sous-groupe d'inertie de $\Dq$ ;
\item[$\bullet$] $\sq$ un rel\`evement (fix\'e) dans $\Dq$ du frobenius de~$\kq$~;
\item[$\bullet$] $N\q$ la norme de $\q$ dans l'extension $K/\Q$ ; sa valeur absolue est le cardinal de~$\kq$.

\end{itemize}
Le but de cette partie est de d\'eterminer les images par le caract\`ere d'isog\'enie~$\la$ de~$\sq$ et~$\Iq$.

\subsubsection{Invariant $j$ non entier}

On suppose dans cette partie que $E$ a potentiellement r\'eduction multiplicative en $\q$.
\begin{proposition}\label{RMhorsp}
Si $E$ a potentiellement r\'eduction multiplicative en $\q$, alors 
\begin{enumerate}
\item $\la^2$ est non ramifi\'e en $\q$ ;
\item $\la^2(\sq)$ vaut $1$ ou $(N\q)^2$ modulo $p$.
\end{enumerate}
\end{proposition}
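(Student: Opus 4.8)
Le plan est de partir de la structure de courbe de Tate. Comme $E$ a potentiellement réduction multiplicative en $\q$, le théorème 14.1 de l'appendice C de \cite{[Sil]} fournit une extension $\Kq'$ de $\Kq$, de degré au plus $2$, sur laquelle $E$ devient isomorphe à une courbe de Tate~; je noterais $G_{\Kq'}$ son groupe de Galois absolu et $\Dq'$, $\Iq'$ les sous-groupes de $\Dq$, $\Iq$ correspondant à $\Kq'$ (chacun d'indice divisant~$2$). La description de l'action de Galois sur les points de $p$-torsion d'une courbe de Tate (proposition 13 de \S1.12 et page 273 de \cite{[Se]}), appliquée à la droite stable $W$ via la suite exacte $0 \to \mu_p \to E_q[p] \to \Z/p \to 0$, donne que le caractère $\la$ restreint à $G_{\Kq'}$ est soit trivial, soit égal au caractère cyclotomique $\kip$. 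L'ingrédient local essentiel est ensuite que $\q$ n'est pas au-dessus de $p$~: le caractère $\kip$ est alors non ramifié en $\q$ et envoie $\sq$ sur $N\q$ modulo~$p$.

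Pour le premier point, je partirais de l'observation que, l'indice de $\Iq'$ dans $\Iq$ divisant~$2$, le carré $\tau^2$ de tout élément $\tau$ de $\Iq$ appartient à $\Iq' \subseteq G_{\Kq'}$. Comme $\la_{|G_{\Kq'}}$ vaut $1$ ou $\kip$ et que ces deux caractères sont triviaux sur l'inertie en $\q$ (pour $\kip$, parce que $\q$ ne divise pas $p$), on obtient $\la(\tau^2) = 1$, c'est-à-dire $\la^2(\tau) = \la(\tau)^2 = 1$. Ainsi $\la^2$ est trivial sur $\Iq$, donc non ramifié en $\q$.

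Le second point repose sur le même mécanisme de passage à un sous-groupe d'indice~$2$, mais appliqué cette fois au Frobenius, de sorte qu'aucune distinction entre $\Kq'/\Kq$ ramifiée ou non n'est nécessaire. Le premier point assure que $\la^2(\sq)$ ne dépend pas du relèvement choisi de Frobenius~; il suffit donc de l'évaluer. Comme $\Dq'$ est d'indice divisant~$2$ dans $\Dq$, on a $\sq^2 \in \Dq' \subseteq G_{\Kq'}$, d'où $\la^2(\sq) = \la(\sq)^2 = \la(\sq^2)$ avec $\sq^2 \in G_{\Kq'}$. Selon que $\la_{|G_{\Kq'}}$ est trivial ou égal à $\kip$, cette quantité vaut $1$ ou $\kip(\sq^2) = \kip(\sq)^2 = (N\q)^2$ modulo~$p$, ce qui est exactement l'énoncé. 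On notera qu'ici je n'affirme jamais $\sq \in G_{\Kq'}$ (faux dans le cas ramifié), mais seulement $\sq^2 \in G_{\Kq'}$.

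Je m'attends à ce que la seule véritable difficulté soit de justifier proprement la dichotomie « $\la_{|G_{\Kq'}}$ trivial ou cyclotomique »~: il faut voir que $W$ s'envoie soit sur la droite $\mu_p$ de la courbe de Tate (l'action y est alors donnée par $\kip$), soit fournit un supplémentaire stable sur lequel $G_{\Kq'}$ agit trivialement à travers le quotient $\Z/p$. Une fois cette dichotomie acquise grâce à \cite{[Se]}, le reste n'est qu'un argument formel de passage à l'indice~$2$, strictement identique pour l'inertie (point~1) et pour le Frobenius (point~2).
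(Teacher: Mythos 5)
Votre preuve est correcte et suit pour l'essentiel la m\^eme voie que celle de l'article~: courbe de Tate sur une extension $\Kq'$ de degr\'e au plus~$2$, dichotomie \og $\la$ trivial ou cyclotomique sur $G_{\Kq'}$ \fg\ tir\'ee de la suite exacte $0 \to \mu_p(\overline{\Kq'}) \to E(\overline{\Kq'})[p] \to \Z/p\Z \to 0$, puis \'el\'evation au carr\'e pour retomber dans les sous-groupes $\Iq'$ et $\Dq'$ d'indice au plus~$2$. Votre traitement du point~(2) est m\^eme un peu plus direct que celui du texte, qui distingue les cas $\kq' = \kq$ et $[\kq' : \kq] = 2$ pour ramener $\sq^2$ \`a $\sq'$ modulo l'inertie, l\`a o\`u vous \'evaluez simplement $\la(\sq^2) = \kip(\sq^2) = (N\q)^2$ dans le cas non trivial.
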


\begin{proof}
Il existe une extension~$\Kq'$ de~$\Kq$, de degr\'e au plus~$2$, sur laquelle~$E$ est isomorphe \`a une courbe de Tate (\cite{[Sil]} appendice C th\'eor\`eme 14.1).

Le sous-groupe de d\'ecomposition $\Dq$ est naturellement isomorphe au groupe de Galois absolu de $\Kq$~; on note~$\Dq'$ son sous-groupe correspondant au groupe de Galois absolu de~$\Kq'$ et~$\Iq'$ le sous-groupe d'inertie de $\Dq'$.
Le groupe $\Dq'$  (respectivement $\Iq'$) est d'indice au plus $2$ dans $\Dq$ (respectivement $\Iq$).
Le corps r\'esiduel~$\kq'$ de~$\Kq'$ est une extension de degr\'e inf\'erieur ou \'egal \`a~$2$ de~$\kq$ ; on fixe un \'el\'ement~$\sq'$ de~$\Dq'$ qui induit sur une cl\^oture alg\'ebrique~$\akq$ de~$\kq$ le morphisme de Frobenius de~$\kq'$.

Soit~$\overline{\Kq'}$ une cl\^oture alg\'ebrique de~$\Kq'$~; l'ensemble~$E(\overline{\Kq'})[p]$ des points de~$p$-torsion de~$E$ sur~$\overline{\Kq'}$ est d\'ecrit par la suite exacte de~$\Dq'$-modules suivante (voir \cite{[Se]} \S 1.12 ou \cite{[Abladic]} IV.A.1.2)~:
$$
0 \longrightarrow \mu_p(\overline{\Kq'}) \longrightarrow E(\overline{\Kq'})[p] \longrightarrow \Z / p\Z \longrightarrow 0,
$$
o\`u $\mu_p(\overline{\Kq'})$ d\'esigne le groupe des racines $p$-i\`emes de l'unit\'e de $\overline{\Kq'}$ et $\Dq'$ agit trivialement sur $\Z / p\Z$. 
Deux cas sont possibles pour~$W(\overline{\Kq'})$, qui est un sous-$\Dq'$-module d'ordre~$p$ de~$E(\overline{\Kq'})[p]$.

Soit l'intersection~$\mu_p(\overline{\Kq'}) \cap W(\overline{\Kq'})$ est r\'eduite \`a l'\'el\'ement neutre.
Alors~$W(\overline{\Kq'})$ s'envoie isomorphiquement (comme~$\Dq'$-module) dans $\Z / p\Z$.
Ceci implique que~$\Dq'$ agit trivialement sur~$W(\overline{\Kq'})$ ; il en est donc de m\^eme pour $\Iq'$ et $\sq'$, qui ont ainsi une image triviale par $\la$.

Soit $\mu_p(\overline{\Kq'})$ et $W(\overline{\Kq'})$ co\"incident.
Alors $\Iq'$ agit trivialement sur~$W(\overline{\Kq'})$ et~$\sq'$ agit sur~$W(\overline{\Kq'})$ comme le frobenius de~$\kq'$ agit sur les racines~$p$-i\`emes de l'unit\'e de~$\overline{\kq'}$ ;
ceci donne~:~$\la(\sq') = |\kq' | \Mod p$.

Dans les deux cas, le caract\`ere~$\la$ est trivial sur le sous-groupe~$\Iq'$, qui est d'indice au plus~$2$ dans~$\Iq$ ; on en d\'eduit que~$\la^2$ est trivial sur~$\Iq$. Pour d\'eterminer la valeur de~$\la(\sq)$, on raisonne selon les diff\'erentes possibilit\'es pour l'extension~$\kq' / \kq$.

Si les corps r\'esiduels~$\kq'$ et~$\kq$ co\"incident, alors~$\sq^2$ est un \'el\'ement de~$\Dq'$ qui induit sur~$\akq$ le carr\'e du morphisme de Frobenius de~$\kq'$ ; l'\'el\'ement $(\sq'\sq^{-1})^2$ est donc dans~$\Iq'$~; on en d\'eduit qu'on~a~:
$$
\la(\sq^2) = \la(\sq'^2) = |\kq' |^2 \Mod p =  |\kq |^2 \Mod p = (N\q)^2 \Mod p.
$$

Si le corps~$\kq'$ est de degr\'e~$2$ sur~$\kq$, alors~$\sq^2$ est un \'el\'ement de~$\Dq'$ qui induit sur~$\akq$ le morphisme de Frobenius de~$\kq'$ ; l'\'el\'ement~$\sq'\sq^{-2}$ est donc dans~$\Iq'$~; on en d\'eduit qu'on~a~:
$$
\la(\sq^2) = \la(\sq') = |\kq' | \Mod p =  |\kq |^2 \Mod p = (N\q)^2 \Mod p.
$$
\end{proof}

\subsubsection{Invariant $j$ entier}
On suppose dans cette partie que~$E$ a potentiellement bonne r\'eduction en~$\q$.

On note~$\Kl$ l'extension galoisienne de~$K$ trivialisant le caract\`ere d'isog\'enie~$\la$~; elle est cyclique et de degr\'e divisant~$p-1$.

L'existence d'un point d'ordre~$p$ (choisi sup\'erieur ou \'egal \`a~$5$) d\'efini sur le corps~$\Kl$ assure le lemme suivant.

\begin{lemme}\label{lem:BR}
En toute place de $\Kl$ au-dessus de $\q$, $E$ n'a pas r\'eduction additive.
\end{lemme}

\begin{proof} 
On rappelle ici rapidement les id\'ees de la d\'emonstration, qui proviennent du~\S 6 de \cite{[Maz]} .

Soient~$\q^{\la}$ une place de~$\Kl$ au-dessus de~$\q$~; on note~$\Kq'$ le compl\'et\'e de~$\Kl$ en~$\q^{\la}$ et~$\kq'$ son corps r\'esiduel.
On suppose par l'absurde que~$E$ a r\'eduction additive sur~$\Kq'$.

 Soit~$\mathcal{E}_{\Kq'}$ le mod\`ele de N\'eron de~$E$ sur~$\Kq'$,~$\widetilde{\mathcal{E}}_{\Kq'}$ sa fibre sp\'eciale et~$\widetilde{\mathcal{E}}_{\Kq'}^{0}$ la composante neutre de cette fibre sp\'eciale. Alors~:
\begin{enumerate}
 	\item[(i)]le groupe~$\widetilde{\mathcal{E}}_{\Kq'}^{0}(\kq')$ est isomorphe \`a $(\kq',+)$~;
	 \item[(ii)]le quotient~$\widetilde{\mathcal{E}}_{\Kq'}(\kq')/\widetilde{\mathcal{E}}_{\Kq'}^{0}(\kq')$ est un groupe d'ordre au plus~$4$ (\cite{[Sil]} appendice~C \S15 tableau 15.1 ou \cite{[Mf]} tableau page 46)~;
 	\item[(iii)]on dispose d'une application de r\'eduction de~$E(\Kq')$ dans~$\widetilde{\mathcal{E}}_{\Kq'}(\kq')$ qui est un morphisme de groupes.
\end{enumerate}
Par d\'efinition de~$\Kq'$, le groupe~$E(\Kq')$ poss\`ede un point~$P$ d'ordre~$p$. On raisonne selon l'image~$\widetilde{P}$ de~$P$ dans~$\widetilde{\mathcal{E}}_{\Kq'}(\kq')$ par l'application de r\'eduction.

 Soit $\widetilde{P}$ appartient \`a la composante neutre de $\widetilde{\mathcal{E}}_{\Kq'}(\kq')$.
 Alors~$P$ appartient au sous-groupe~$E^{0}(\Kq')$ de~$E(\Kq')$ form\'e des points envoy\'es dans~$\widetilde{\mathcal{E}}_{\Kq'}^{0}(\kq')$ par l'application de r\'eduction.
 Ce sous-groupe est d\'ecrit par la suite exacte de groupes suivante (\cite{[Sil]} VII \S 2 propositions 2.1 et 2.2) :
 $$
 0 \longrightarrow \widehat{E}_{\Kq'}(\mathfrak{m}_{\Kq'}) \longrightarrow E^{0}(\Kq') \longrightarrow (\kq' , +) \longrightarrow 0,
 $$
 o\`u~$\widehat{E}_{\Kq'}$ d\'esigne le groupe formel associ\'e \`a une \'equation de Weierstrass minimale de~$E$ sur l'anneau des entiers~$\mathcal{O}_{\Kq'}$ de~$\Kq'$ et~$\mathfrak{m}_{\Kq'}$ l'unique id\'eal maximal de~$\mathcal{O}_{\Kq'}$.
 
 Comme~$p$ est diff\'erent de la caract\'eristique r\'esiduelle de~$\Kq'$, le groupe~$\widehat{E}_{\Kq'}(\mathfrak{m}_{\Kq'})$ n'a pas de sous-groupe d'ordre~$p$ (\cite{[Sil]} IV \S 3 proposition 3.2) 
 Ceci implique que le sous-groupe engendr\'e par~$P$ est envoy\'e injectivement dans~$(\kq', +)$.
 Or le groupe~$(\kq', +)$ n'a pas non plus de point d'ordre~$p$. On obtient donc une contradiction.

 Soit $\widetilde{P}$ n'appartient pas \`a~$\widetilde{\mathcal{E}}_{\Kq'}^{0}(\kq')$. Alors~$\widetilde{P}$ engendre dans~$\widetilde{\mathcal{E}}_{\Kq'}(\kq')$ un sous-groupe d'ordre~$p$ d'intersection triviale avec~$\widetilde{\mathcal{E}}_{\Kq'}^{0}(\kq')$.
 Ce sous-groupe s'envoie donc injectivement dans le quotient~$\widetilde{\mathcal{E}}_{\Kq'}(\kq')/\widetilde{\mathcal{E}}_{\Kq'}^{0}(\kq')$. Ce quotient \'etant d'ordre au plus~$4$, on obtient une contradiction avec le choix de~$p$ sup\'erieur ou \'egal \`a~$5$.
\end{proof}

\begin{proposition}\label{BRhorsp:iner}
Si $E$ a potentiellement bonne r\'eduction en $\q$, alors l'ordre de l'image du sous-groupe d'inertie~$\Iq$ par le caract\`ere d'isog\'enie~$\la$ divise~$4$ ou~$6$~; en particulier, le caract\`ere~$\la^{12}$ est non ramifi\'e en~$\q$.
\end{proposition}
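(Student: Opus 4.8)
Le plan est de relier l'ordre de~$\la(\Iq)$ à la réduction de~$E$ sur l'extension~$\Kl$, puis d'invoquer le théorème de Serre--Tate. Comme~$E$ a potentiellement bonne réduction en~$\q$, son invariant~$j$ est entier et elle ne saurait avoir de réduction multiplicative sur aucune extension de~$\Kq$~; joint au lemme~\ref{lem:BR}, qui exclut la réduction additive en toute place~$\q^{\la}$ de~$\Kl$ au-dessus de~$\q$, ceci montre que~$E$ acquiert bonne réduction sur~$\Kl$ en une telle place. L'extension~$\Kl$ trivialisant~$\la$, le groupe~$\Gal(\Kl/K)$ s'identifie à~$\la(G_K)$ et son sous-groupe d'inertie en~$\q^{\la}$ à~$\la(\Iq)$~; l'ordre cherché est donc exactement l'indice de ramification de~$\Kl/K$ en~$\q$.

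J'appliquerais ensuite le théorème de Serre--Tate (\cite{[SeTa]}~\S2). Comme~$\q$ n'est pas au-dessus de~$p$ et que~$E$ a potentiellement bonne réduction, l'action de~$\Iq$ sur~$E_p$ se factorise par un groupe fini~$\Phi$ (le défaut de semi-stabilité) qui se plonge dans le groupe~$\Aut(\widetilde{E})$ des automorphismes de la courbe réduite sur le corps résiduel. Son cardinal divise~$24$, donc est premier à~$p$ puisque~$p \geq 5$~; le~$\Fp$-module~$E_p$ est alors semi-simple sous~$\Iq$. Le caractère cyclotomique~$\kip$ étant non ramifié en~$\q$, l'inertie agit diagonalement par~$\mathrm{diag}(\la, \la^{-1})$ dans une base convenable~; l'image de~$\Iq$ dans~$\GL(E_p)$ est ainsi cyclique et isomorphe à~$\la(\Iq)$. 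En particulier,~$\la(\Iq)$ est un quotient cyclique de~$\Phi$, et il suffit de borner l'ordre des quotients cycliques de~$\Phi$.

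Le dernier point se ramène à une analyse des sous-groupes de~$\Aut(\widetilde{E})$. Lorsque la caractéristique résiduelle~$\ell$ de~$\q$ est supérieure ou égale à~$5$, ce groupe est cyclique d'ordre~$2$,~$4$ ou~$6$ et la conclusion est immédiate. Le point délicat est le cas~$\ell \in \{2,3\}$, où~$\Aut(\widetilde{E})$ peut être non abélien, d'ordre~$12$ (pour~$\ell = 3$, invariant supersingulier) ou~$24$ (pour~$\ell = 2$, groupe tétraédral binaire~$\SL_2(\mathbb{F}_3)$)~: il faut alors vérifier, par l'examen direct de leurs sous-groupes, que tout quotient cyclique d'un sous-groupe de~$\Aut(\widetilde{E})$ est d'ordre divisant~$4$ ou~$6$ (les ordres qui apparaissent étant~$1$,~$2$,~$3$,~$4$ et~$6$). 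L'ordre de~$\la(\Iq)$ divise donc~$4$ ou~$6$ dans tous les cas~; en particulier~$\la^{12}$ est trivial sur~$\Iq$, c'est-à-dire non ramifié en~$\q$. La principale difficulté réside précisément dans ce contrôle des groupes d'automorphismes en caractéristique résiduelle~$2$ et~$3$, où l'ordre de~$\Phi$ lui-même peut dépasser~$6$ sans que l'image cyclique pertinente le fasse.
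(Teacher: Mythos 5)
Votre d\'emonstration est correcte, mais l'\'etape centrale suit une route diff\'erente de celle du texte. Les deux arguments convergent vers la m\^eme conclusion --- $\la(\Iq)$ est un groupe cyclique qui se r\'ealise dans le groupe $\Aut(\widetilde{E})$ des automorphismes de la courbe r\'eduite, et l'examen des groupes possibles (cycliques d'ordre $2$, $4$ ou $6$, ou les deux groupes non ab\'eliens d'ordre $12$ et $24$ en caract\'eristique r\'esiduelle $3$ et $2$) donne la borne --- mais ils y parviennent autrement. Le texte identifie $\la(\Iq)$ au groupe de Galois de $\Kq'\Kq^{nr}/\Kq^{nr}$ o\`u $\Kq'$ est le compl\'et\'e de $\Kl$, invoque le lemme~\ref{lem:BR} pour y obtenir la bonne r\'eduction, puis la minimalit\'e du corps de bonne r\'eduction (corollaire~3 du th\'eor\`eme~2 de \cite{[SeTa]}) pour montrer que ce corps co\"incide avec celui issu de $K^{\phip}$, dont le groupe de Galois sur $\Kq^{nr}$ se plonge dans $\Aut(\widetilde{E})$. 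Vous court-circuitez cette comparaison de corps~: l'action de $\Iq$ sur $E_p$ se factorise par le d\'efaut de semi-stabilit\'e $\Phi$, d'ordre divisant $24$ donc premier \`a $p$, de sorte que $E_p$ est semi-simple sous $\Iq$ (Maschke), donc diagonalisable de caract\`eres $(\la , \la^{-1})$ puisque $\kip$ est non ramifi\'e en $\q$, d'o\`u $\phip(\Iq) \simeq \la(\Iq)$ directement comme quotient cyclique de $\Phi$. Cet argument, purement local et de th\'eorie des repr\'esentations, rend inutiles le lemme~\ref{lem:BR} et tout votre premier paragraphe (qui n'est d'ailleurs plus utilis\'e dans la suite de votre r\'edaction)~; en contrepartie, il faut v\'erifier la borne pour les quotients cycliques des sous-groupes de $\Aut(\widetilde{E})$ et non seulement pour ses sous-groupes cycliques, ce qui reste imm\'ediat (les ordres obtenus sont $1$, $2$, $3$, $4$ et $6$). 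L'approche du texte \'etablit en passant un fait plus pr\'ecis --- l'inertie de $\Kl/K$ en $\q$ s'identifie au groupe $\Phi$ tout entier --- tandis que la v\^otre est plus courte et plus autonome.
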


\begin{proof}
L'image de~$\Iq$ par~$\la$ est un sous-groupe de~$\Fpx$ ; il est donc cyclique (et son ordre divise~$p-1$).
D'autre part,~$\la(\Iq)$ est isomorphe au sous-groupe d'inertie de l'extension~$\Kl / K$ en la place~$\q$ (canoniquement d\'efini car l'extension~$\Kl / K$ est ab\'elienne).
Soient~$\q^{\la}$ une place de~$\Kl$ au-dessus de~$\q$,~$\Kq'$ le compl\'et\'e de~$\Kl$ en~$\q^{\la}$ et~$\Kq^{nr}$ l'extension maximale non ramifi\'ee de~$\Kq$.
Alors~$\la(\Iq)$ est aussi isomorphe au groupe de Galois de l'extension~$\Kq'\Kq^{nr} /  \Kq^{nr}$. D'apr\`es le lemme~\ref{lem:BR},~$E$ a bonne r\'eduction sur le corps $\Kq'\Kq^{nr}$.

Soit~$K^{\phip}$ l'extension galoisienne de $K$ trivialisant la repr\'esentation~$\phip$~; elle contient~$\Kl$. Soient~$\q^{\phip}$ une place de~$K^{\phip}$  au-dessus de~$\q^{\la}$ et $\Kq''$ le compl\'et\'e de~$K^{\phip}$ en $\q^{\phip}$~; il contient naturellement~$\Kq'$. Ainsi, le corps~$\Kq''\Kq^{nr}$ contient~$\Kq'\Kq^{nr}$.

Par ailleurs,~$p$ \'etant sup\'erieur ou \'egal \`a~$3$ et diff\'erent de la caract\'eristique r\'esiduelle de~$\q$,~$\Kq''\Kq^{nr}$ est la plus petite extension de~$\Kq^{nr}$ sur laquelle~$E$ acquiert bonne r\'eduction (\cite{[SeTa]} \S2 corollaire 3 du th\'eor\`eme 2). Comme~$E$ a bonne r\'eduction sur~$\Kq'\Kq^{nr}$, on obtient que~$\Kq''\Kq^{nr}$ est inclus dans~$\Kq'\Kq^{nr}$. On en d\'eduit finalement que les corps~$\Kq''\Kq^{nr}$ et~$\Kq'\Kq^{nr}$ co\"incident.

D'apr\`es la d\'emonstration du th\'eor\`eme~2~(\S2) de~\cite{[SeTa]} , le groupe de Galois de l'extension~$\Kq''\Kq^{nr} / \Kq^{nr}$ s'injecte dans le groupe des automorphismes de la courbe elliptique obtenue \`a partir de~$E \times_{K} \Kq''$ par r\'eduction modulo~$\q^{\phip}$.
Les groupes pouvant survenir comme groupes d'automorphismes d'une telle courbe elliptique sont connus (\cite{[Sil]} appendice A, proposition 1.2 et exercice A.1)~: ils sont soit cyclique d'ordre~$2$,~$4$ ou~$6$, soit produit semi-direct d'un groupe cyclique d'ordre~$3$ par un groupe cyclique d'ordre~$4$, soit produit semi-direct du groupe  des quaternions (d'ordre~$8$) par un groupe cyclique d'ordre~$3$. On v\'erifie alors que les sous-groupes cycliques de tels groupes sont d'ordre divisant $4$ ou $6$.
\end{proof}

\begin{proposition}\label{BRhorsp:frob}
Si~$E$ a potentiellement bonne r\'eduction en~$\q$, alors~:
\begin{enumerate}
\item L'action de~$\sq$ sur le module de Tate en~$p$ de~$E$ a un polyn\^ome caract\'eristique~$\Pq(X)$ \`a coefficients dans~$\Z$ (et ind\'ependant de~$p$).
\item Les racines de~$\Pq(X)$ dans~$\C$ sont conjugu\'ees complexes l'une de l'autre et ont m\^eme valeur absolue~$\sqrt{ | N\q | }$ ; elles engendrent dans~$\C$ un corps~$\Lq$ qui est soit~$\Q$ soit un corps quadratique imaginaire.
\item Pour tout  id\'eal premier~$\Pcq$ de~$\Lq$ au-dessus de~$p$, les images des racines de~$\Pq$ dans~$\OLq / \Pcq$ sont dans~$\Fpx$ ; il existe une racine~$\bq$ de~$\Pq(X)$ telle que~$\la(\sq)$ soit \'egal \`a $ \bq \Mod \Pcq$.
\end{enumerate}
\end{proposition}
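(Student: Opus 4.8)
L'idée est de ramener l'action de~$\sq$ à celle d'un endomorphisme de la courbe réduite, puis de lire le caractère~$\la$ sur la réduction modulo~$p$ du module de Tate. Pour le point~(1), comme~$E$ a potentiellement bonne réduction en~$\q$ et que~$p$ est distinct de la caractéristique résiduelle de~$\q$, le sous-groupe d'inertie~$\Iq$ agit sur~$T_p(E)$ à travers un quotient fini (critère de Néron--Ogg--Shafarevich). L'invariant~$j$ de~$E$ étant entier en~$\q$, sa réduction appartient à~$\kq$, de sorte que la courbe~$\widetilde{E}$ obtenue par bonne réduction sur une extension convenable de~$\Kq$ se descend en une courbe elliptique sur le corps résiduel~$\kq$. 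En suivant la démonstration du théorème~2 de~\cite{[SeTa]}, on identifie alors l'action de~$\sq$ sur~$T_p(E) \cong T_p(\widetilde{E})$ à celle d'un endomorphisme~$\psi$ de~$\widetilde{E}$~: le Frobenius de~$\widetilde{E}$ relatif à~$\kq$ composé avec l'automorphisme issu de la torsion qui mesure l'action de l'inertie. Cet endomorphisme appartient à~$\Endo(\widetilde{E})$ et est de degré~$N\q$~; il satisfait donc son polynôme caractéristique réduit~$\Pq(X) = X^2 - a_{\q} X + N\q$, unitaire, à coefficients entiers et indépendant de~$p$ puisqu'il s'agit d'un invariant géométrique de~$\psi$. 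C'est l'étape la plus délicate~: l'identification rigoureuse de~$\sq$ avec un endomorphisme de~$\widetilde{E}$ défini sur~$\kq$, le contrôle de la torsion et le passage de la bonne réduction potentielle à un Frobenius honnête.

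Le point~(2) en résulte. L'endomorphisme~$\psi$ est un nombre de Weil de poids~$1$~: par le théorème de Hasse (\cite{[Sil]}), ses images par tout plongement dans~$\C$ sont de valeur absolue~$\sqrt{|N\q|}$, si bien que le discriminant~$a_{\q}^2 - 4N\q$ de~$\Pq$ est négatif ou nul. S'il est nul,~$\Pq$ possède une racine double, nécessairement rationnelle, et~$\Lq$ vaut~$\Q$. S'il est strictement négatif, les deux racines sont complexes conjuguées, de produit~$N\q$ donc chacune de module~$\sqrt{N\q}$, et engendrent un corps quadratique imaginaire~$\Lq$. Dans les deux cas, les racines sont conjuguées complexes, de module~$\sqrt{|N\q|}$, et~$\Lq$ est~$\Q$ ou quadratique imaginaire.

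Pour le point~(3), on réduit le module de Tate modulo~$p$~: on a~$T_p(E)/p\,T_p(E) \cong E_p$, et le polynôme caractéristique de~$\sq$ agissant sur~$E_p$ est la réduction~$\Pq \Mod p$. La représentation~$\phip$ étant réductible, la matrice de~$\sq$ sur~$E_p$ est, dans la base fixée, triangulaire supérieure de termes diagonaux~$\la(\sq)$ et~$\kip\la^{-1}(\sq)$~; ainsi
$$
\Pq(X) \equiv \bigl(X - \la(\sq)\bigr)\bigl(X - \kip\la^{-1}(\sq)\bigr) \Mod p,
$$
et ce polynôme est scindé sur~$\Fp$. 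Pour tout idéal premier~$\Pcq$ de~$\Lq$ au-dessus de~$p$, les réductions modulo~$\Pcq$ des racines~$\bq$ et~$\bqb$ de~$\Pq$ sont donc les deux racines de~$\Pq \Mod p$, qui appartiennent à~$\Fp$~; elles sont non nulles car leur produit vaut~$N\q$, inversible modulo~$p$ puisque~$\q$ n'est pas au-dessus de~$p$. Il suffit enfin de choisir pour~$\bq$ la racine dont la réduction modulo~$\Pcq$ est~$\la(\sq)$, ce qui est loisible puisque~$\{\bq \Mod \Pcq,\, \bqb \Mod \Pcq\} = \{\la(\sq),\, \kip\la^{-1}(\sq)\}$.
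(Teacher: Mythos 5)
Votre d\'emonstration est correcte et suit essentiellement la m\^eme d\'emarche que celle de l'article, qui se contente de citer le th\'eor\`eme~3 (\S 2) de~\cite{[SeTa]} pour les points (1) et (2) l\`a o\`u vous en esquissez la preuve. L'argument pour le point (3) est identique~: le polyn\^ome~$\Pq$ r\'eduit modulo~$p$ est scind\'e dans~$\Fp$ car~$\phip(\sq)$ est triangulaire sup\'erieure de diagonale~$(\la(\sq), \kip\la^{-1}(\sq))$, et ses racines sont les r\'eductions modulo~$\Pcq$ des racines de~$\Pq$ dans~$\Lq$.
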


\begin{proof}
Les points 1 et 2 r\'esultent du th\'eor\`eme~3~(\S 2) de~\cite{[SeTa]}. Comme~$G_p$ est inclus dans un sous-groupe de Borel, la r\'eduction du polyn\^ome~$\Pq$ modulo~$p$ est scind\'ee dans~$\Fp$ et ses racines sont les r\'eductions modulo~$\Pcq$ des racines de~$\Pq$ dans~$\Lq$ ; ceci donne le point 3.
\end{proof}

\subsection{Th\'eor\`eme dans le cas r\'eductible}\label{ssec:thmAred}

L'\'etude men\'ee dans les parties~\ref{ssec:Inerenp} et~\ref{subsec:horsp} permet de d\'emontrer le th\'eor\`eme de l'introduction lorsque la repr\'esentation $\phip$ est r\'eductible~: on obtient une borne d\'ependant du discriminant, du degr\'e et du nombre de classes d'id\'eaux du corps de base $K$.

\begin{TheoHommodpA_Red}
On suppose la repr\'esentation~$\phip$ r\'eductible. Si~$p$ est non ramifi\'e dans~$K$ et strictement sup\'erieur \`a~$(1 + 3^{6dh})^2$, alors~$G_p$ contient un sous-groupe d'indice divisant~$8$ ou~$12$ des homoth\'eties.
\end{TheoHommodpA_Red}

On verra que les homoth\'eties recherch\'ees se trouvent dans le sous-groupe engendr\'e par les images par~$\phip$ des sous-groupes d'inertie de~$G_K$ associ\'es aux diff\'erentes places de~$K$ au-dessus de~$p$~; ces images sont notamment d\'ecrites par la famille de coefficients~$(\ap)_{\p | p}$ introduite dans la partie~\ref{ssec:Inerenp} (proposition~\ref{coeffap}).
La d\'emonstration du th\'eor\`eme se d\'ecompose ainsi en quatre lemmes, qui correspondent aux diff\'erentes familles~$(\ap)_{\p | p}$ possibles.

On constate qu'il n'y a que deux situations pr\'esentant une obstruction \`a ce que $G_p$ contienne un \og gros \fg\  sous-groupe des homoth\'eties (lemmes \ref{ap=0ou12} et \ref{ap=4ou8}).
La th\'eorie du corps de classes globale appliqu\'ee \`a l'extension ab\'elienne de K d\'efinie par la puissance douzi\`eme~$\la^{12}$ du caract\`ere d'isog\'enie permet de borner les nombres premiers~$p$ pour lesquels ces obstructions surviennent.
Dans un cas (lemme \ref{ap=4ou8}), elle est lev\'ee en \'etudiant la compatibilit\'e entre les actions sur le sous-groupe d'isog\'enie des sous-groupes d'inertie au-dessus de~$p$ d'une part et d'un morphisme de Frobenius pour une place de $K$ au-dessus de~$2$ d'autre part.
L'autre cas (lemme \ref{ap=0ou12}) provient de courbes elliptiques ayant un point d'ordre~$p$ d\'efini sur une \og petite \fg\ extension de~$K$~: on utilise alors les bornes uniformes sur l'ordre des points de torsion des courbes elliptiques qui figurent dans~\cite{[Mer96]} ou~\cite{[Pa]}.

On rappelle qu'on a suppos\'e, d\`es le d\'ebut de la partie~\ref{sec:red},~$p$ non ramifi\'e dans~$K$~; cette hypoth\`ese implique en particulier que le caract\`ere cyclotomique~$\kip$ est surjectif de~$G_K$ dans~$\Fpx$, car sa restriction \`a tout sous-groupe d'inertie de~$G_K$ en une place de~$K$ au-dessus de~$p$ l'est.

On rappelle \'egalement qu'on a fix\'e au d\'ebut de la partie~\ref{sec:red} une base de~$E_p$ dans laquelle la forme matricielle de~$\phip$  est triangulaire sup\'erieure avec pour caract\`eres diagonaux~$(\la, \kip\la^{-1})$. On remarque qu'on a, pour tout couple~$(\alpha, \beta)$ d'\'el\'ements de~$\Fp$~:
$$
\begin{pmatrix}
\alpha & \beta \\
0 & \alpha
\end{pmatrix}^p
= 
\begin{pmatrix}
\alpha^p & p\alpha^{p-1}\beta \\
0 & \alpha^p
\end{pmatrix}
=
\begin{pmatrix}
\alpha & 0 \\
0 & \alpha
\end{pmatrix}.
$$
Dans chacun des lemmes ci-dessous, on recherchera donc dans~$G_p$ un \'el\'ement dont la diagonale est une homoth\'etie de rapport engendrant un sous-groupe d'indice divisant~$8$ ou~$12$ de~$\Fpx$.

\begin{lemme}\label{lem:ap=6} 
On suppose qu'il existe un id\'eal premier~$\p$ de~$K$ au-dessus de~$p$ pour lequel le coefficient~$\ap$ est \'egal \`a~$6$~; alors $G_p$ contient les carr\'es des homoth\'eties.
\end{lemme}

\begin{proof}
Soient~$\p$ un id\'eal premier de~$K$ au-dessus de~$p$ pour lequel le coefficient~$\ap$ est \'egal \`a~$6$ et~$\Ip$ un sous-groupe d'inertie pour $\p$ dans $G_K$.

D'apr\`es la d\'emonstration de la proposition~\ref{coeffap},~$p$ est congru \`a $3$ modulo $4$ et la restriction \`a~$\Ip$ du caract\`ere d'isog\'enie~$\la$  co\"incide avec~$\kip^{\ap'}$, pour un entier~$\ap'$ v\'erifiant~:~$4\ap'  \equiv 2 \Mod p -1$.
Il existe donc un entier relatif~$u$ v\'erifiant~:~$4\ap'  = 2 + u (p -1)$.
Comme l'entier~$\frac{p-1}{2}$ est impair, on obtient que $u$ est \'egalement impair et finalement on a~:~$2\ap'  \equiv 1 + \frac{p -1}{2} \Mod p -1$.
On en d\'eduit qu'on a~:
$$
\la^2_{|\Ip} = \kip^{2\ap'} = \kip\kip^{\frac{p-1}{2}}.
$$
Ainsi, la restriction \`a~$\Ip$ du carr\'e de la repr\'esentation~$\phip$ est de la forme
$$
\varphi_{p|\Ip}^2
=
\begin{pmatrix}
\la^2 & \star \\
0 & \left(\kip\la^{-1}\right)^2
\end{pmatrix}
=
\begin{pmatrix}
\kip\kip^{\frac{p-1}{2}}& \star \\
0 & \kip\kip^{\frac{p-1}{2}}
\end{pmatrix}.
$$

Comme~$p$ est congru \`a~$3$ modulo~$4$ et que~$\chi_{p|\Ip}$ est surjectif dans~$\Fpx$, l'image de $\Ip$ par le caract\`ere $\kip\kip^{\frac{p-1}{2}}$ est form\'ee de tous les carr\'es de $\Fpx$.
\end{proof}

\begin{lemme}\label{ap=0ou12}
On suppose qu'on est dans l'un des deux cas suivants~:
\begin{itemize}
\item[$\bullet$] pour tout id\'eal premier~$\p$ de~$K$ au-dessus de~$p$, le coefficient~$\ap$ est \'egal \`a~$0$~; 
\item[$\bullet$] pour tout id\'eal premier~$\p$ de~$K$ au-dessus de~$p$, le coefficient~$\ap$ est \'egal \`a~$12$.
\end{itemize}
Alors~$p$ est inf\'erieur ou \'egal \`a~$(1 + 3^{6dh})^2$.
\end{lemme}

\begin{proof}
On suppose d'abord que tous les coefficients $\ap$ sont \'egaux \`a $0$.

D'apr\`es les propositions \ref{RMhorsp} et \ref{BRhorsp:iner}, le caract\`ere $\la^{12}$ est non ramifi\'e hors de $p$.
Par d\'efinition des coefficients $\ap$ (proposition~\ref{coeffap}), $\la^{12}$ est aussi non ramifi\'e aux places au-dessus de $p$.
Ainsi, l'extension~$K^{\lad}$ de~$K$ trivialisant~$\lad$, qui est ab\'elienne, est non ramifi\'ee en toute place finie de~$K$. Le corps~$K^{\lad}$ est donc inclus dans le corps de classes de Hilbert de~$K$~; son degr\'e (sur~$\Q$) est donc inf\'erieur ou \'egal \`a~$dh$.
Comme le caract\`ere~$\la$ est d'image cyclique, le corps~$\Kl$ qui trivialise~$\la$ est une extension de degr\'e divisant~$12$ de~$K^{\lad}$.
Son degr\'e (sur~$\Q$) est ainsi inf\'erieur ou \'egal \`a~$12dh$.

Or, la courbe~$E$ poss\`ede un point d'ordre~$p$ d\'efini sur~$\Kl$.
Il existe alors plusieurs bornes pour~$p$ en fonction du degr\'e de~$\Kl$ sur~$\Q$.
On a choisi pour l'expression du lemme celle, obtenue par Merel et Oesterl\'e, qui figure dans les introductions de~\cite{[Pa]} et~\cite{[Mer96]}.

Lorsque tous les coefficients $\ap$ sont \'egaux \`a $12$, on consid\`ere le quotient de~$E$  par le sous-groupe d'isog\'enie~$W$.

 On obtient une courbe elliptique~$E'$ d\'efinie sur~$K$, isog\`ene \`a~$E$~sur~$K$~; la courbe~$E'$ poss\`ede~$E[p]/W$ comme sous-groupe d'ordre~$p$ d\'efini sur~$K$ et le caract\`ere d'isog\'enie associ\'e \`a ce sous-groupe d'ordre~$p$ est~$\kip\la^{-1}$. Le raisonnement pr\'ec\'edent appliqu\'e au couple~$(E' , E[p]/W)$ donne alors la m\^eme borne pour~$p$.
\end{proof}

\begin{lemme}\label{ap=4ou8}
On suppose qu'on est dans l'un des deux cas suivants~:
\begin{itemize}
\item[$\bullet$] pour tout id\'eal premier~$\p$ de~$K$ au-dessus de~$p$, le coefficient~$\ap$ est \'egal \`a~$4$~; 
\item[$\bullet$] pour tout id\'eal premier~$\p$ de~$K$ au-dessus de~$p$, le coefficient~$\ap$ est \'egal \`a~$8$.
\end{itemize}
Alors $p$ est inf\'erieur ou \'egal \`a $\left( 2^{6dh} + 2^{4dh} \right)^2$.
\end{lemme}

\begin{proof}
On suppose d'abord que tous les coefficients $\ap$ sont \'egaux \`a $4$.
Alors le caract\`ere $\la^{12}\kip^{-4}$ est non ramifi\'e en toute place finie de~$K$~; on a donc~$\la^{12h} = \kip^{4h}$.
On raisonne ensuite selon le type de r\'eduction de la courbe~$E$ en une place fix\'ee de~$K$ au-dessus de~$2$.
Soit~$\mathcal{L}$  une telle place et~$N\mathcal{L}$ sa norme dans l'extension~$K/\Q$ (en particulier,~$N\mathcal{L}$ est de valeur absolue inf\'erieure ou \'egale \`a~$2^d$).
Avec les notations de la partie~\ref{subsec:horsp},~on a~$\la^{12h}(\sigma_{\mathcal{L}}) = (N\mathcal{L})^{4h} \Mod p$.

Si~$E$ a potentiellement r\'eduction multiplicative en~$\mathcal{L}$, alors d'apr\`es la proposition~\ref{RMhorsp}, on a~$\la^{12}(\sigma_{\mathcal{L}}) = 1 \Mod p$
 ou~$\la^{12}(\sigma_{\mathcal{L}}) = (N\mathcal{L})^{12} \Mod p$.
 On en d\'eduit que~$p$ divise l'un des deux entiers relatifs~$(N\mathcal{L})^{4h} - 1$ ou~$(N\mathcal{L})^{12h} - (N\mathcal{L})^{4h}$~;
  ainsi,~$p$ divise~$(N\mathcal{L})^{4h} - 1$ ou~$(N\mathcal{L})^{8h} - 1$.
  Comme ces deux entiers sont strictement positifs, on obtient que~$p$ est inf\'erieur ou \'egal \`a leur maximum, qui est lui-m\^eme inf\'erieur ou \'egal \`a~$2^{8dh} - 1$.

Si $E$ a potentiellement bonne r\'eduction  en $\mathcal{L}$, alors d'apr\`es la proposition \ref{BRhorsp:frob} (et avec les notations comme dans cette proposition), on a~: $\la^{12}(\sigma_{\mathcal{L}}) = \beta_{\mathcal{L}}^{12} \Mod \mathcal{P}_{\mathcal{L}}$. On a donc dans $L_{\mathcal{L}}$~: $\beta_{\mathcal{L}}^{12h} - (N\mathcal{L})^{4h}  \equiv 0 \Mod \mathcal{P}_{\mathcal{L}}$.
Comme les entiers alg\'ebriques~$\beta_{\mathcal{L}}^{12h}$ et~$(N\mathcal{L})^{4h}$  ont des valeurs absolues complexes diff\'erentes (respectivement~$(N\mathcal{L})^{6h}$ et~$(N\mathcal{L})^{4h}$), ils ne sont pas \'egaux.
On en d\'eduit que~$p$ divise l'entier relatif non nul $N_{L_{\mathcal{L}}/\Q}\left( \beta_{\mathcal{L}}^{12h} - (N\mathcal{L})^{4h} \right)$ dont la valeur absolue est inf\'erieure ou \'egale \`a $\left( (N\mathcal{L})^{6h} + (N\mathcal{L})^{4h} \right)^2$ et finalement \`a~$\left( 2^{6dh} + 2^{4dh} \right)^2$ (qui est strictement sup\'erieur \`a~$2^{8dh} - 1$).

Lorsque tous les coefficients  $\ap$ sont \'egaux \`a $8$, on proc\`ede comme \`a la fin de la d\'emonstration du lemme \ref{ap=0ou12}~: on se ram\`ene \`a une courbe isog\`ene \`a $E$ sur $K$, poss\'edant un sous-groupe d'ordre $p$ d\'efini sur $K$ pour lequel le caract\`ere d'isog\'enie est \'egal \`a~$\kip\la^{-1}$ et on applique le raisonnement pr\'ec\'edent.
 \end{proof}
 
 \begin{remarque}Pour tout r\'eel~$x$ sup\'erieur ou \'egal \`a~$1$, on a
 $$
 \left(1 + \left( 3^6 \right)^x \right)^2 \geq \left( \left( 2^6 \right)^x + \left( 2^4 \right)^x \right)^2.
 $$
 C'est pourquoi la borne du lemme \ref{ap=0ou12} pr\'evaut sur celle du lemme \ref{ap=4ou8} dans l'\'enonc\'e du th\'eor\`eme de l'introduction.
 \end{remarque}
 
 \begin{lemme}
On suppose qu'il existe deux id\'eaux premiers~$\p$ et~$\p'$ de~$K$ au-dessus de~$p$ pour lesquels les coefficients~$\ap$ et~$a_{\p'}$ sont distincts~; alors $G_p$ contient un sous-groupe d'indice divisant~$8$ ou~$12$ des homoth\'eties.
\end{lemme}

\begin{proof}
Soient $\p$ et $\p'$ deux id\'eaux premiers de~$K$ au-dessus de~$p$ pour lesquels les coefficients~$\ap$ et~$a_{\p'}$ sont distincts~; on fixe $\Ip$ et $I_{\p'}$ des sous-groupes d'inertie pour $\p$ et $\p'$ dans $G_K$.

D'apr\`es le lemme \ref{lem:ap=6}, on peut se contenter de traiter le cas o\`u~$\ap$ et~$a_{\p'}$ sont diff\'erents de~$6$. Les paires possibles pour les valeurs de~$\ap$ et~$a_{\p'}$ sont alors~: 
$$
\{0, 4 \}, \{0, 8 \}, \{0, 12 \}, \{ 4, 8 \}, \{4, 12 \}, \{8, 12 \}.
$$ 
On traite chacune de ces valeurs successivement. On note~$x$ un \'el\'ement g\'en\'erateur de~$\Fpx$.

Si la paire $\{ \ap , a_{\p'} \}$ est $\{0, 4\}$, alors la restriction de la puissance douzi\`eme de~$\phip$ \`a~$\Ip$ est de la forme
$$
\varphi^{12}_{|\Ip}
=
\begin{pmatrix}
1 & \star \\
0 & \kip^{12}
\end{pmatrix}
$$
et sa restriction \`a $I_{\p'}$ est de la forme
$$
\varphi^{12}_{| I_{\p'}}
=
\begin{pmatrix}
\kip^{4} & \star \\
0 & \kip^{8}
\end{pmatrix}.
$$

Comme la restriction de~$\kip$ \`a~$\Ip$ (comme \`a~$I_{\p'}$) est surjective dans~$\Fpx$, on obtient que~$G_p$
contient un \'el\'ement de diagonale $(1, x^{-12})$ et un autre de diagonale $((x^3)^4, (x^3)^8)$ ;
 leur produit donne un \'el\'ement de diagonale $(x^{12},x^{12})$. Ainsi,~$G_p$ contient l'homoth\'etie de rapport~$x^{12}$.
 Comme~$p$ est congru \`a~$2$ modulo~$3$ (d\'emonstration de la proposition~\ref{coeffap}), l'\'el\'ement~$x^3$ engendre aussi~$\Fpx$~; on en d\'eduit que~$G_p$ contient les puissances quatri\`emes des homoth\'eties.
 Le cas de la paire $\{8, 12 \}$ se traite de mani\`ere similaire.

Si la paire~$\{ \ap , a_{\p'} \}$ est~$\{0,8 \}$, alors $G_p$ contient un \'el\'ement de diagonale~$(1, x^{12})$ et un autre de diagonale~$((x^3)^8, (x^3)^4)$~; leur produit donne un \'el\'ement de diagonale~$(x^{24},x^{24})$.
 Comme~$p$ est congru \`a~$2$ modulo~$3$,~ $x^3$ engendre aussi~$\Fpx$ donc~$G_p$ contient les puissances huiti\`emes des homoth\'eties. Le cas de la paire $\{4, 12 \}$ se traite de mani\`ere similaire.

Si la paire~$\{ \ap , a_{\p'} \}$ est~$\{0, 12 \}$, alors $G_p$ contient un \'el\'ement de diagonale~$(1, x^{12})$  et un autre de diagonale~$(x^{12}, 1)$ ; leur produit donne un \'el\'ement de diagonale~$(x^{12},x^{12})$.
Dans ce cas $G_p$ contient les puissances douzi\`emes des homoth\'eties.

Si la paire~$\{ \ap , a_{\p'} \}$ est~$\{4, 8 \}$, alors $G_p$ contient un \'el\'ement de diagonale~$(x^4, x^8)$ et un autre de diagonale~$(x^8, x^4)$~; leur produit donne un \'el\'ement de diagonale $(x^{12},x^{12})$.
Comme~$p$ est congru \`a~$2$ modulo~$3$,~$x^3$ engendre aussi~$\Fpx$ donc~$G_p$ contient les puissances quatri\`emes des homoth\'eties.
\end{proof}

\nocite{}
\shorthandoff{:}
\bibliographystyle{alpha}
\bibliography{Hudig.bib}

\end{document}